\DeclareMathAlphabet{\eusm}{OT1}{eusm}{m}{n}
\newtheorem{theorem}{Theorem}[section]
\newtheorem{prop}[theorem]{Proposition}
\newtheorem{cor}[theorem]{Corollary}
\newtheorem{lem}[theorem]{Lemma}
\newtheorem{example}[theorem]{Example}
\newtheorem{remark}[theorem]{Remark}
\tikzstyle{vertex}=[circle, draw, fill=black, inner sep=0pt, minimum size=6pt]
\begin{document}
\title[Leavitt path algebras with bounded index of nilpotence]{Leavitt path algebras with bounded index of nilpotence and simple modules over them}
\subjclass[2010]{16D50, 16D60.}
\keywords{Leavitt path algebras, bounded index of nilpotence, direct-finiteness, simple modules, injective modules}
\author{Kulumani M. Rangaswamy}
\address{Departament of Mathematics, University of Colorado at Colorado Springs, Colorado-80918, USA}
\email{krangasw@uccs.edu}
\author{Ashish K. Srivastava}
\address{Department of Mathematics and Statistics, St. Louis University, St.
Louis, MO-63103, USA} \email{asrivas3@slu.edu}
\thanks{The work of the second author is partially supported by a grant from Simons Foundation (grant number 426367).}
\maketitle

\begin{abstract}
In this paper we completely describe graphically Leavitt path algebras with bounded index of nilpotence and show that each graded simple module $S$ over a Leavitt path algebra with bounded index of nilpotence is graded $\Sigma$-injective, that is, $S^{(\alpha)}$ is graded injective for any cardinal $\alpha$. Furthermore, we characterize Leavitt path algebras over which each simple module is $\Sigma$-injective. We have shown that each simple module over a Leavitt path algebra $L_K(E)$ is $\Sigma$-injective if and only if the graph $E$ contains no cycles, and there is a positive integer $d$ such that the length of any path in $E$ is less than or equal to $d$ and the number of distinct paths ending at any vertex $v$ (including $v$) is less than or equal to $d$.   
\end{abstract}

\bigskip

\bigskip

\noindent Leavitt path algebras are algebraic analogues of graph C*-algebras and are also natural generalizations of Leavitt algebras of type $($$1, n$$)$ constructed in \cite{L}. The objective of this paper is to characterize Leavitt path algebras with bounded index of nilpotence and study simple modules over them. The initial organized attempt to study the module theory over Leavitt path algebras was done in \cite{AB} where the simply presented modules over a Leavitt path algebra $L_{K}(E)$ of a finite graph $E$ were described.
As an important step in the study of the modules over $L_{K}(E)$ for an
arbitrary graph $E$, the simple $L_{K}(E)$-modules and\ also Leavitt path
algebras with simple modules of special types, have recently been investigated
in a series of papers (see e.g. \cite{AR1}, \cite{C}, \cite{R1}, \cite{HR}). In this paper we focus on the question when (graded) simple modules over Leavitt path algebras are not only (graded) injective but they are (graded) $\Sigma$-injective and we link this question to the classical ring theoretic properties such as bounded index of nilpotence and direct finiteness.   

A ring $R$ is said to have \textit{bounded index of}
\textit{nilpotence} if there is a positive integer $n$ such that $x^{n}=0$
for all nilpotent elements $x$ in $R$. If $n$ is the least such integer then $R$ is said to have {\it index of nilpotence $n$}. We completely describe graphically the Leavitt path algebras with bounded index of nilpotence. We show that a Leavitt
path algebra $L_K(E)$ of an arbitrary graph $E$ has index of nilpotence $n$ if and only if no cycle in the graph $E$ has an exit, every path in $E$ has at most $n$ distinct vertices and every vertex in $E$ is the range of at most $n$ distinct paths. In this case, $L$ becomes a subdirect product of
matrix rings $M_{t}(K)$ or $M_{t}(K[x,x^{-1}])$ of finite order $t\leq n$.
Interestingly, it turns out that over a Leavitt path algebra with bounded index of nilpotence, each graded simple module is graded $\Sigma$-injective and a Leavitt path algebra with bounded index of nilpotence is directly-finite. We construct examples that show the converses of these statements do not hold. 

Recall that a ring $R$ is called a \textit{right $($left$)$ }$V$\textit{-ring} if
every simple right (left) $R$-module is injective. Kaplansky showed that a
commutative ring $R$ is a $V$-ring if and only $R$ is von Neumann
regular. It is known that this equivalence no longer holds if $R$ is not
commutative (see \cite{F}). A ring $R$ is called a \textit{right $($left$)$ }%
$\Sigma$\textit{-}$V$\textit{ring}, if every simple right (left) $R$-module $S$
is $\Sigma$-injective, that is, any direct sum $S^{(\alpha)}$ of $\alpha$ copies of $S$ is injective. For properties of $V$-rings and $\Sigma$-$V$ rings, see (\cite{F}, \cite{Lam}, \cite{AS}). In this paper, we
characterize, both graphically and algebraically, the Leavitt path algebra $L$
of an arbitrary graph $E$ which is a $\Sigma$-$V$ ring. Specifically, we show that
$L$ is a $\Sigma$-$V$ ring if and only if the graph $E$ contains no cycles, and there is a positive integer $d$ such that the length of any path in $E$ is less than or equal to $d$ and the number of distinct paths ending at any vertex $v$ (including $v$) is less than or equal to $d$. Algebraically, this is equivalent to $L$ being von
Neumann regular ring and a subdirect product of an arbitrary number of matrix
rings over $K$ of finite order $n$ bounded above by a positive integer $d$. If $E$ is a
row-finite graph, then $L$ actually decomposes as a direct sum of matrix rings,
$L=%
{\displaystyle\bigoplus\limits_{i\in I}}
M_{n_{i}}(K)$, where $I$ is an arbitrary index set, and $d$ is a fixed positive
integer such that $n_{i}\leq d$ for all $i\in I$.

\bigskip

\section{Preliminaries}

\noindent For the general notation, terminology and results in Leavitt path algebras, we
refer to \cite{AArS}, \cite{R} and \cite{T}. We will be using some of the
needed results in associative rings, von Neumann regular rings and modules
from \cite{Goodearl} and \cite{Lam}. We give below an outline of some of the needed basic
concepts and results.

A (directed) graph $E=(E^{0},E^{1},r,s)$ consists of two sets $E^{0}$ and
$E^{1}$ together with maps $r,s:E^{1}\rightarrow E^{0}$. The elements of
$E^{0}$ are called \textit{vertices} and the elements of $E^{1}$
\textit{edges}.

A vertex $v$ is called a \textit{sink} if it emits no edges and a vertex $v$
is called a \textit{regular} \textit{vertex} if it emits a non-empty finite
set of edges. An \textit{infinite emitter} is a vertex which emits infinitely
many edges. For each $e\in E^{1}$, we call $e^{\ast}$ a ghost edge. We let
$r(e^{\ast})$ denote $s(e)$, and we let $s(e^{\ast})$ denote $r(e)$.
A\textit{\ path} $\mu$ of length $n>0$ is a finite sequence of edges
$\mu=e_{1}e_{2}\cdot\cdot\cdot e_{n}$ with $r(e_{i})=s(e_{i+1})$ for all
$i=1,\cdot\cdot\cdot,n-1$. In this case $\mu^{\ast}=e_{n}^{\ast}\cdot
\cdot\cdot e_{2}^{\ast}e_{1}^{\ast}$ is the corresponding ghost path. A vertex
is considered a path of length $0$.

A path $\mu$ $=e_{1}\dots e_{n}$ in $E$ is \textit{closed} if $r(e_{n}%
)=s(e_{1})$, in which case $\mu$ is said to be \textit{based at the vertex
}$s(e_{1})$. A closed path $\mu$ as above is called \textit{simple} provided
it does not pass through its base more than once, i.e., $s(e_{i})\neq
s(e_{1})$ for all $i=2,...,n$. The closed path $\mu$ is called a
\textit{cycle} if it does not pass through any of its vertices twice, that is,
if $s(e_{i})\neq s(e_{j})$ for every $i\neq j$.

A graph $E$ is said to satisfy \textit{Condition (K)}, if any vertex $v$ on a
closed path $c$ is also the base of a another closed path $c^{\prime}%
$different from $c$.

An \textit{exit }for a path $\mu=e_{1}\dots e_{n}$ is an edge $e$ such that
$s(e)=s(e_{i})$ for some $i$ and $e\neq e_{i}$.

If there is a path from vertex $u$ to a vertex $v$, we write $u\geq v$. A
subset $D$ of vertices is said to be \textit{downward directed }\ if for any
$u,v\in D$, there exists a $w\in D$ such that $u\geq w$ and $v\geq w$. A
subset $H$ of $E^{0}$ is called \textit{hereditary} if, whenever $v\in H$ and
$w\in E^{0}$ satisfy $v\geq w$, then $w\in H$. A hereditary set is
\textit{saturated} if, for any regular vertex $v$, $r(s^{-1}(v))\subseteq H$
implies $v\in H$.

Given an arbitrary graph $E$ and a field $K$, the \textit{Leavitt path algebra
}$L_{K}(E)$ is defined to be the $K$-algebra generated by a set $\{v:v\in
E^{0}\}$ of pair-wise orthogonal idempotents together with a set of variables
$\{e,e^{\ast}:e\in E^{1}\}$ which satisfy the following conditions:

(1) \ $s(e)e=e=er(e)$ for all $e\in E^{1}$.

(2) $r(e)e^{\ast}=e^{\ast}=e^{\ast}s(e)$\ for all $e\in E^{1}$.

(3) (The ``CK-1 relations") For all $e,f\in E^{1}$, $e^{\ast}e=r(e)$ and
$e^{\ast}f=0$ if $e\neq f$.

(4) (The ``CK-2 relations") For every regular vertex $v\in E^{0}$,
\[
v=\sum_{e\in E^{1},s(e)=v}ee^{\ast}.
\]
Every Leavitt path algebra $L_{K}(E)$ is a $%
\mathbb{Z}
$\textit{-graded algebra}, namely, $L_{K}(E)=%
{\displaystyle\bigoplus\limits_{n\in\mathbb{Z}}}
L_{n}$ induced by defining, for all $v\in E^{0}$ and $e\in E^{1}$, $\deg
(v)=0$, $\deg(e)=1$, $\deg(e^{\ast})=-1$. Here the $L_{n}$ are abelian
subgroups satisfying $L_{m}L_{n}\subseteq L_{m+n}$ for all $m,n\in%
\mathbb{Z}
$. Further, for each $n\in%
\mathbb{Z}
$, the \textit{homogeneous component }$L_{n}$ is given by
\[
L_{n}=\{%
{\textstyle\sum}
k_{i}\alpha_{i}\beta_{i}^{\ast}\in L:\text{ }|\alpha_{i}|-|\beta_{i}|=n\}.
\]
Elements of $L_{n}$ are called \textit{homogeneous elements}. An ideal $I$ of
$L_{K}(E)$ is said to be a \textit{graded ideal} if $I=$ $%
{\displaystyle\bigoplus\limits_{n\in\mathbb{Z}}}
(I\cap L_{n})$. If $A,B$ are graded modules over a graded ring $R$, we write
$A\cong_{gr}B$ if $A$ and $B$ are graded isomorphic and we write $A\oplus
_{gr}B$ to denote a graded direct sum. We will also be using the usual grading
of a matrix of finite order. For this and for the various properties of graded
rings and graded modules, we refer to \cite{H-1}, \cite{HR} and \cite{NV}.

A \textit{breaking vertex }of a hereditary saturated subset $H$ is an infinite
emitter $w\in E^{0}\backslash H$ with the property that $0<|s^{-1}(w)\cap
r^{-1}(E^{0}\backslash H)|<\infty$. The set of all breaking vertices of $H$ is
denoted by $B_{H}$. For any $v\in B_{H}$, $v^{H}$ denotes the element
$v-\sum_{s(e)=v,r(e)\notin H}ee^{\ast}$. Given a hereditary saturated subset
$H$ and a subset $S\subseteq B_{H}$, $(H,S)$ is called an \textit{admissible
pair.} Given an admissible pair $(H,S)$, the ideal generated by $H\cup
\{v^{H}:v\in S\}$ is denoted by $I(H,S)$. It was shown in \cite{T} that the
graded ideals of $L_{K}(E)$ are precisely the ideals of the form $I(H,S)$ for
some admissible pair $(H,S)$. Moreover, $L_{K}(E)/I(H,S)\cong L_{K}%
(E\backslash(H,S))$. Here $E\backslash(H,S)$ is a \textit{Quotient graph of
}$E$ where $(E\backslash(H,S))^{0}=(E^{0}\backslash H)\cup\{v^{\prime}:v\in
B_{H}\backslash S\}$ and $(E\backslash(H,S))^{1}=\{e\in E^{1}:r(e)\notin
H\}\cup\{e^{\prime}:e\in E^{1}$ with $r(e)\in B_{H}\backslash S\}$ and $r,s$
are extended to $(E\backslash(H,S))^{0}$ by setting $s(e^{\prime})=s(e)$ and
$r(e^{\prime})=r(e)^{\prime}$.

We will also be using the fact that the Jacobson radical (and in particular,
the prime/Baer radical) of $L_{K}(E)$ is always zero (see \cite{AArS}). It is known (see \cite{R}) that if $P$ is a prime ideal of $L$ with $P\cap
E^{0}=H$, then $E^{0}\backslash H$ is downward directed.

A \textit{maximal tail }is a subset $M$ of $E^0$ satisfying the following three properties:
\begin{enumerate}

\item $M$ is downward directed, that is, for any pair of vertices $u,v\in M$, there exists a
$w\in M$ such that $u\geq w,v\geq w$;

\item If $u\in M$ and $v\in E^{0}$ satisfies $v\geq u$, then $v\in M$;

\item If $u\in M$ emits edges, there is at least one edge $e$ with $s(e)=u$
and $r(e)\in M$.
\end{enumerate}

Let $\Lambda$ be an infinite set and $R$, a ring. Then $M_{\Lambda}(R)$ denotes the ring of $\Lambda\times\Lambda$ matrices in which all except at most finitely many entries are non-zero.

\bigskip

\section{Leavitt path algebras having bounded index of nilpotence}

\noindent Recall that a ring $R$ is said to have \textit{bounded index of}
\textit{nilpotence} if there is a positive integer $n$ such that $x^{n}=0$
for all nilpotent elements $x$ in $R$. If $n$ is the least such integer then $R$ is said to have {\it index of nilpotence $n$}. In this section, we characterize Leavitt path algebras having bounded index of nilpotence and show their connection to the behavior of simple modules over them.

\begin{remark} \rm
\label{Thm7-2} We shall be using a version of Theorem 7.2 in \cite{Goodearl}
showing that condition (a) of that theorem implies conditions (c) and (d).
This theorem assumes that the ring is a von Neumann regular ring with
identity. But an examination of the proof shows that the same proof works for
rings without identity and that the proof of (a)$\implies$(c) does not use the von Neumann regularity of $R$ and that a version of (d)
as stated below does not need von Neumann regularity. So we re-state, as a
separate proposition, a part of Theorem 7.2 of \cite{Goodearl} which is valid
for any ring.
\end{remark}

\begin{prop}
\label{Goodearl Thm} $($Theorem 7.2, \cite{Goodearl}$)$ Suppose $R$ is a ring not
necessarily having a multiplicative identity. If $R$ has index of
nilpotence $n$, then the following properties hold:
\begin{enumerate}
\item If $e_{1},\cdot\cdot\cdot,e_{n},e_{n+1}$ are orthogonal idempotents in
$R$, then $e_{1}Re_{2}R\cdot\cdot\cdot e_{n}Re_{n+1}=0$.

\item $R$ contains no direct sums of $n+1$ non-zero pair-wise isomorphic right
ideals $A_{1},\cdot\cdot\cdot,A_{n},A_{n+1}$, where, for each $i$,
$A_{i}=\epsilon_{i}R$ and the $\epsilon_{i}$ are orthogonal idempotents.
\end{enumerate}
\end{prop}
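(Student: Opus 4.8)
The plan is to reproduce Goodearl's argument for Theorem 7.2, keeping track of the fact that it uses neither a multiplicative identity nor von Neumann regularity.

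\emph{Part (1).} Given orthogonal idempotents $e_1,\dots,e_{n+1}$ and arbitrary elements $r_1,\dots,r_n\in R$, I would set $x=\sum_{i=1}^{n}e_ir_ie_{i+1}$. Using only the relations $e_ie_j=\delta_{ij}e_i$, a direct computation shows that $(e_ir_ie_{i+1})(e_jr_je_{j+1})$ vanishes unless $j=i+1$, and iterating gives $x^k=\sum_{i=1}^{n-k+1}e_ir_ie_{i+1}r_{i+1}\cdots e_{i+k-1}r_{i+k-1}e_{i+k}$ for $1\le k\le n$; in particular $x^{n}=e_1r_1e_2r_2\cdots e_nr_ne_{n+1}$ while $x^{n+1}=0$. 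Thus $x$ is nilpotent, and since $R$ has index of nilpotence $n$ we conclude $x^n=0$, i.e.\ $e_1r_1e_2\cdots e_nr_ne_{n+1}=0$. As the $r_i$ are arbitrary and $e_1Re_2R\cdots e_nRe_{n+1}$ is additively generated by such products, this ideal is zero. No use is made of a unit element or of regularity.

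\emph{Part (2).} Suppose, toward a contradiction, that $A_i=\epsilon_iR$ ($1\le i\le n+1$) are nonzero and pairwise isomorphic right ideals with $\epsilon_1,\dots,\epsilon_{n+1}$ orthogonal idempotents. (Internal directness of $\bigoplus A_i$ is automatic from the orthogonality of the $\epsilon_i$ and will not be needed.) I would first record the non-unital form of the identification $\mathrm{Hom}_R(\epsilon R,M)\cong M\epsilon$ via $\varphi\mapsto\varphi(\epsilon)$, whose inverse $m\mapsto(\epsilon r\mapsto mr)$ is well-defined because $m=m\epsilon$. Fixing isomorphisms $\psi_i:\epsilon_iR\to\epsilon_{i+1}R$ and setting $a_i=\psi_i(\epsilon_i)\in\epsilon_{i+1}R\epsilon_i$, one checks $\psi_i(\epsilon_ir)=a_ir$ and hence $(\psi_n\circ\cdots\circ\psi_1)(\epsilon_1)=a_na_{n-1}\cdots a_1$. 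On one hand this element is nonzero, since $\psi_n\circ\cdots\circ\psi_1$ is injective and $\epsilon_1\ne0$ (as $A_1\ne0$). On the other hand $a_na_{n-1}\cdots a_1\in\epsilon_{n+1}R\epsilon_nR\cdots\epsilon_2R\epsilon_1$, which equals $0$ by Part (1) applied to the orthogonal idempotents $\epsilon_{n+1},\epsilon_n,\dots,\epsilon_1$ listed in that order. This contradiction establishes (2).

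The argument is essentially bookkeeping. The only points requiring attention are getting the order of the idempotents right when feeding Part (1) into Part (2), and verifying that the $\mathrm{Hom}$-identification and the composition formula survive the absence of a unit --- which is precisely the place where the argument in \cite{Goodearl} tacitly uses an identity. I expect no substantial obstacle beyond this routine non-unital verification.
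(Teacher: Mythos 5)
Your proof is correct and is essentially the argument the paper points to: the paper gives no proof of its own but cites Goodearl's Theorem 7.2 and remarks that the proof there works without identity or regularity, and your Part (1) is exactly Goodearl's nilpotent-element computation while Part (2) is his deduction of (d) from (c), with the idempotent-generation hypothesis built into the statement precisely so that regularity is not needed. The non-unital bookkeeping (the $\H_R(\epsilon R,M)\cong M\epsilon$ identification and the ordering of the idempotents when invoking Part (1)) is handled correctly.
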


\noindent A well-known useful fact (\cite{Goodearl}) is that when $K$ is a field or an
integral domain, the ring $M_{n}(K)$ of $n\times n$ matrices over $K$ has index of nilpotence $n$.

\begin{prop}
\label{bddIdx = > NE} Suppose $E$ is an arbitrary graph and $L:=L_{K}(E)$ has index of nilpotence $n$. Then no cycle in $E$ has an exit.
\end{prop}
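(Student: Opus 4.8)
The plan is to argue by contradiction: suppose some cycle $c$ in $E$ has an exit. The standard structural fact about Leavitt path algebras is that a cycle with an exit produces, inside $L_K(E)$, an infinite set of pairwise orthogonal idempotents that are all ``equivalent'' in a strong sense — and this clash with the consequences of bounded index recorded in Proposition~\ref{Goodearl Thm}.

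More concretely, let $c = e_1 e_2 \cdots e_m$ be a cycle based at a vertex $v = s(e_1)$, and let $f$ be an exit for $c$, say $s(f) = s(e_i) =: w$ and $f \neq e_i$. After re-basing the cycle at $w$ (replacing $c$ by the closed path $e_i e_{i+1}\cdots e_m e_1 \cdots e_{i-1}$, still a cycle), I may assume without loss of generality that the exit occurs at the base vertex $v$ of $c$; so there is an edge $f$ with $s(f)=v$ and $f\neq e_1$. Now for each $k\geq 0$ consider the path $\mu_k = c^k f$ (that is, traverse the cycle $k$ times, then take the exit), and set $\epsilon_k = \mu_k \mu_k^{\ast} = c^k f f^{\ast} (c^{\ast})^k$. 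Each $\epsilon_k$ is a nonzero idempotent in $L$ (nonzero because $\mu_k$ is a genuine path, so $\mu_k^{\ast}\mu_k = r(f)\neq 0$). The key computation is that these idempotents are pairwise orthogonal: for $j < k$, using $f^{\ast} e_1 = 0$ (since $f\neq e_1$ and both have source $v$, by the CK-1 relations) one gets $\mu_j^{\ast}\mu_k = f^{\ast}(c^{\ast})^j c^k f = f^{\ast} c^{k-j} f = f^{\ast} e_1(\cdots) f = 0$, and similarly $\mu_k^{\ast}\mu_j=0$; hence $\epsilon_j\epsilon_k = \mu_j(\mu_j^{\ast}\mu_k)\mu_k^{\ast} = 0$ and $\epsilon_k\epsilon_j=0$. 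Finally, the right ideals $\epsilon_k L$ are pairwise isomorphic as right $L$-modules: the maps $x\mapsto \mu_j^{\ast}\mu_k x$ and $x\mapsto \mu_k^{\ast}\mu_j x$ give mutually inverse isomorphisms $\epsilon_j L \leftrightarrow \epsilon_k L$ (both composites reduce, via $\mu_k^{\ast}\mu_k = r(f)$, to multiplication by the identity of the relevant corner), so in particular $\epsilon_j L \cong \epsilon_k L$ for all $j,k$.

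Thus $\{\epsilon_k : k\geq 0\}$ is an infinite family of orthogonal idempotents with $\epsilon_j L\cong\epsilon_k L$ for all $j,k$, and a fortiori $\epsilon_0 L \oplus \epsilon_1 L \oplus\cdots\oplus\epsilon_n L$ is a direct sum of $n+1$ nonzero pairwise isomorphic right ideals of the required form. This directly contradicts part~(2) of Proposition~\ref{Goodearl Thm} (alternatively, one can use part~(1): $\epsilon_1 L\epsilon_2 L\cdots\epsilon_n L\epsilon_{n+1}$ is nonzero because $\epsilon_j L \epsilon_k \ni \mu_j(\mu_j^{\ast}\mu_k)\mu_k^{\ast}\cdot$ — more carefully, $\epsilon_j L\epsilon_{j+1}$ contains $\mu_j \mu_{j+1}^{\ast}$, and telescoping the product $\mu_1\mu_2^{\ast}\cdot\mu_2\mu_3^{\ast}\cdots = \mu_1 r(f)^{\ast}\cdots\mu_{n+1}^{\ast}\neq 0$, contradicting that $e_1 R e_2 R\cdots e_{n+1}=0$ for orthogonal idempotents). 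Since $L$ has index of nilpotence $n$, both conclusions of Proposition~\ref{Goodearl Thm} must hold, so our assumption was false: no cycle in $E$ has an exit.

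The routine parts are the bookkeeping with the CK relations showing $\mu_j^{\ast}\mu_k = 0$ for $j\neq k$ and showing the corner isomorphisms are well defined and mutually inverse; these are the standard ``path algebra arithmetic'' manipulations and I would only sketch them. The one place to be slightly careful is the re-basing step — ensuring that after moving the base of the cycle to the vertex where the exit occurs, the result is still a cycle (no repeated vertices) and the exit edge $f$ genuinely satisfies $f\neq e_1$ in the new labelling; this is immediate but worth stating. I expect no serious obstacle: the whole point is that ``cycle with an exit'' is precisely the graph-theoretic signature of infinitely many orthogonal equivalent idempotents, which is exactly what bounded index of nilpotence forbids.
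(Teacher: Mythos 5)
Your proof is correct in substance but takes a genuinely different route from the paper's. The paper works with the non-orthogonal descending chain of idempotents $\epsilon_k=c^k(c^{\ast})^k$, shows $\epsilon_kL\supsetneq\epsilon_{k+1}L$ with $\epsilon_kL\cong\epsilon_{k+1}L$ (the exit $f$ is used only to prove strictness of the inclusion), and then extracts $n+1$ independent pairwise isomorphic right ideals from the successive complements $(\epsilon_k-\epsilon_{k+1})L$ via a somewhat laborious bookkeeping of orthogonal idempotents, before invoking Proposition~\ref{Goodearl Thm}(2). You instead use the exit to build the orthogonal idempotents directly, $\epsilon_k=c^kff^{\ast}(c^{\ast})^k$, with orthogonality and the mutual isomorphisms falling out of one CK-1 computation ($f^{\ast}e_1=0$); this is more explicit and arguably cleaner, and your telescoping alternative via part (1) of Proposition~\ref{Goodearl Thm} is also valid. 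One slip to fix: the maps you write for the isomorphism $\epsilon_jL\leftrightarrow\epsilon_kL$, namely $x\mapsto\mu_j^{\ast}\mu_kx$ and $x\mapsto\mu_k^{\ast}\mu_jx$, are the zero maps by your own orthogonality computation ($\mu_j^{\ast}\mu_k=0$ for $j\neq k$); you clearly intend left multiplication by $\mu_k\mu_j^{\ast}$ and by $\mu_j\mu_k^{\ast}$, whose composites are $\mu_j\mu_k^{\ast}\mu_k\mu_j^{\ast}=\mu_jr(f)\mu_j^{\ast}=\epsilon_j$ and likewise $\epsilon_k$, exactly as your parenthetical remark about $\mu_k^{\ast}\mu_k=r(f)$ indicates. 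The re-basing step is fine as you describe it.
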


\begin{proof}
Suppose there is a cycle $c$ having an exit $f$ at a vertex $v$. Clearly, for
each $n\geq0$, $\epsilon_{n}=c^{n}(c^{\ast})^{n}$ is an idempotent and for
each $n$, $\epsilon_{n}\epsilon_{n+1}=\epsilon_{n+1}=\epsilon_{n+1}%
\epsilon_{n}$. So $\epsilon_{n}L\supseteq\epsilon_{n+1}L$. This inclusion is
strict, because, if $\epsilon_{n}=\epsilon_{n+1}a$ for some $a\in L$, then
$f^{\ast}(c^{\ast})^{n}\epsilon_{n}=f^{\ast}(c^{\ast})^{n}\epsilon_{n+1}a$.
This equation becomes $f^{\ast}(c^{\ast})^{n}c^{n}(c^{\ast})^{n}=f^{\ast
}(c^{\ast})^{n}c^{n+1}(c^{\ast})^{n+1}a$ and, by CK-1 condition, it simplifies
to $f^{\ast}(c^{\ast})^{n}=f^{\ast}c(c^{\ast})^{n+1}=0$, a contradiction.
Thus, for every $n$, we have a non-trivial direct decomposition $\epsilon
_{n}L=\epsilon_{n+1}L\oplus(\epsilon_{n}-\epsilon_{n+1})L$. Moreover, for
every $n\geq0$, it is easy to see trhat $\epsilon_{n}L\cong\epsilon_{n+1}L$
under the map $\theta:\epsilon_{n}a\longmapsto\epsilon_{n+1}\epsilon_{n}a$.
So, for each $n\geq0$, $\epsilon_{n}L\cong\epsilon_{n+1}L$ and $\epsilon
_{n}L=\epsilon_{n+1}L\oplus(\epsilon_{n}-\epsilon_{n+1})L$. \ Thus we get a
series of decompositions $vL=B_{1}\oplus A_{1}$, where $B_{1}=$ $\epsilon
_{1}L\cong vL$, $B_{1}=B_{2}\oplus A_{2}$, where $B_{2}\cong B_{1}$,
$A_{2}\cong A_{1}$, $\cdot\cdot\cdot,B_{n}=B_{n+1}\oplus A_{n+1}$, with
$B_{n+1}\cong B_{n}$ and $A_{n+1}\cong A_{n}$, etc. Now the $B_{i}\cong vL$
for all $i$ and $A_{1}\cong A_{2}\cong\cdot\cdot\cdot\cong A_{n}\cong
A_{n+1}\cong\cdot\cdot\cdot$. Note that, if $\epsilon_{1}=u_{2}+v_{2}$, with
$u_{2}\in B_{2}$ and $v_{2}\in A_{2}$, then $u_{2},v_{2}$ are orthogonal
idempotents, $\epsilon_{1}u_{2}=u_{2}=u_{2}\epsilon_{1}$, $\epsilon_{1}%
v_{2}=v_{2}=v_{2}\epsilon_{1}$, $B_{2}=u_{2}L$ and $A_{2}=v_{2}L$. Proceeding
like this we get, for all $n\geq1$, $B_{n+1}=u_{n+1}L$ and $A_{n+1}=v_{n+1}L$
where $u_{n+1}$, $v_{n+1}$ are orthogonal idempotents, $u_{n}=u_{n+1}+v_{n+1}%
$. Also $A_{1}=(v-\epsilon_{1})L$. Let $v_{1}=v-\epsilon_{1}$. It is then
clear that $v_{1},\cdot\cdot\cdot,v_{n},v_{n+1}$ are orthogonal idempotents
and $A_{i}=v_{i}L$, for all $i=1,\cdot\cdot\cdot,n+1$. Thus the right ideals
$A_{i}=\epsilon_{i}L$ are independent and so $vL$ contains the direct sum
$A_{1}\oplus A_{2}\oplus\cdot\cdot\cdot\oplus A_{n}\oplus A_{n+1}$ of $n+1$
isomorphic right ideals. By Propopsition \ref{Goodearl Thm}, this is a
contradiction. Hence no cycle in the graph $E$ has an exit.
\end{proof}

Now we are ready to characterize Leavitt path algebras with bounded index of nilpotence. 

\begin{theorem}
\label{bdd index} Let $E$ be an arbitrary graph. Then the following properties
are equivalent for $L:=L_{K}(E)$:

(a) $L$ has index of nilpotence at most $n$;

(b) No cycle in the graph $E$ has an exit, there is a fixed positive integer
$n$ such that the number of distinct vertices in any path in $E$ is less than or equal to $n$
and that the number of distinct paths that end at any given vertex $v$
(including $v)$ is less than or equal to $n$;

(c) There is a fixed positive integer $n$ such that for every graded prime
ideal $Q$ of $L$, $L/Q$ is graded isomorphic to $M_{t}(K)$ or $M_{t}%
(K[x,x^{-1}])$ for integer $t\leq n$ under the usual grading of matrices.
\end{theorem}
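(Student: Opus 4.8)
The plan is to prove Theorem~\ref{bdd index} by establishing the cycle of implications (a)$\implies$(b)$\implies$(c)$\implies$(a).

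For (a)$\implies$(b): The "no cycle has an exit" part is exactly Proposition~\ref{bddIdx = > NE}, so only the two numerical bounds remain. Suppose a path $\mu = e_1 e_2 \cdots e_m$ has $m+1$ distinct vertices $v_0, v_1, \dots, v_m$ (with $v_{i-1} = s(e_i)$, $v_i = r(e_i)$). The key idempotents to use are the initial subpaths: set $\epsilon_i = e_1 \cdots e_i (e_1 \cdots e_i)^\ast$ (and $\epsilon_0 = v_0$). One checks $\epsilon_i \epsilon_j = \epsilon_{\max(i,j)}$, so $v_0 L = \epsilon_0 L \supseteq \epsilon_1 L \supseteq \cdots \supseteq \epsilon_m L$, and each inclusion is strict precisely because the vertices are distinct (if $\epsilon_{i-1} = \epsilon_i a$ one left-multiplies by $e_i^\ast (e_1\cdots e_{i-1})^\ast$ and applies CK-1 to get $v_{i-1} \in (\text{something})$, forcing $v_{i-1} = v_i$ via the path structure — a contradiction). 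This produces $m$ orthogonal nonzero idempotents $v_0 - \epsilon_1, \epsilon_1 - \epsilon_2, \dots, \epsilon_{m-1} - \epsilon_m$ together with $\epsilon_m$, hence $m+1$ orthogonal nonzero idempotents, and by Proposition~\ref{Goodearl Thm}(1) (applied to the products $e_1 R e_2 R \cdots$, or rather its consequence that $n+1$ orthogonal idempotents cannot all be "nonzero in series") we need $m \le n$. Actually the cleanest route here is: if $f_1, \dots, f_{n+1}$ are orthogonal idempotents then $f_1 R f_2 R \cdots f_n R f_{n+1} = 0$ by Proposition~\ref{Goodearl Thm}(1); taking $f_i = \epsilon_{i-1} - \epsilon_i$ and noting $f_{i-1} L f_i \ne 0$ (it contains the nonzero element $f_{i-1}\cdot(\text{edge}) = $ the relevant monomial) gives a contradiction, so any path has at most $n$ distinct vertices. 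For the "$\le n$ distinct paths ending at $v$" bound: if $\mu_1, \dots, \mu_{n+1}$ are distinct paths with $r(\mu_j) = v$, then the elements $\mu_j^\ast \mu_j = v$ but more usefully the idempotents $\mu_j \mu_j^\ast$ — using CK-1, $\mu_j^\ast \mu_k = 0$ unless one is an initial segment of the other (and since all end at $v$, being an initial segment forces equality), so the $\mu_j \mu_j^\ast$ are orthogonal idempotents, each isomorphic as a right ideal to $vL$ via $a \mapsto \mu_j a$; thus $vL \supseteq \bigoplus_{j=1}^{n+1} \mu_j \mu_j^\ast L$ is a direct sum of $n+1$ pairwise isomorphic nonzero right ideals generated by orthogonal idempotents, contradicting Proposition~\ref{Goodearl Thm}(2). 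Hence $\le n$ paths end at $v$.

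For (b)$\implies$(c): Let $Q$ be a graded prime ideal. By the structure theory cited in the Preliminaries, $Q = I(H,S)$ for an admissible pair, and $L/Q \cong L_K(E \setminus (H,S))$; primeness makes $(E\setminus(H,S))^0$ downward directed. Since no cycle in $E$ has an exit, the same holds in the quotient graph, so by the known structure of Leavitt path algebras of graphs whose cycles have no exits (and which are downward directed), $L/Q$ is graded isomorphic to $M_\Lambda(K)$ or $M_\Lambda(K[x,x^{-1}])$ for some index set $\Lambda$ with the usual matrix grading — here one uses that a downward directed graph with no exits either has a unique sink (giving matrices over $K$) or a unique cycle with no exit (giving matrices over $K[x,x^{-1}]$), a standard fact. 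It remains to see $\Lambda$ is finite of size $\le n$: the size of $\Lambda$ is the number of paths ending at the sink (resp. at the base vertex of the cycle) in the quotient graph, which is bounded by the corresponding count in $E$, hence $\le n$ by (b). So $L/Q \cong_{gr} M_t(K)$ or $M_t(K[x,x^{-1}])$ with $t \le n$.

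For (c)$\implies$(a): Since the Baer (prime) radical of $L_K(E)$ is always zero, the intersection of all prime ideals is zero; one reduces to graded primes using that the graded-prime radical also vanishes (or that every prime contains a graded prime, via the standard argument that $L$ has "enough" graded ideals), so $L$ embeds as a subdirect product $L \hookrightarrow \prod_Q L/Q$ over graded primes $Q$. Each $L/Q$ is $M_t(K)$ or $M_t(K[x,x^{-1}])$ with $t \le n$, and each of these rings has index of nilpotence $t \le n$ (the matrix fact quoted after Proposition~\ref{Goodearl Thm}, noting $K[x,x^{-1}]$ is an integral domain). A subring of a product of rings each of index of nilpotence $\le n$ again has index of nilpotence $\le n$, since if $x \in L$ is nilpotent then its image in each factor is nilpotent, hence killed by the $n$-th power, so $x^n = 0$. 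This gives (a).

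The main obstacle I expect is the strictness of the inclusions $\epsilon_{i-1}L \supsetneq \epsilon_i L$ in the (a)$\implies$(b) step (and correctly extracting the orthogonal idempotents with the right nonvanishing to feed into Proposition~\ref{Goodearl Thm}) — this is the analogue of the delicate CK-1 computation already carried out in the proof of Proposition~\ref{bddIdx = > NE}, and getting the bookkeeping exactly right for arbitrary paths rather than powers of a single cycle takes care. A secondary subtlety is justifying in (b)$\implies$(c) that the quotient graph, being downward directed with no exit, genuinely has the simple form (unique sink or unique no-exit cycle), and that the relevant path-count is preserved under passing to $E\setminus(H,S)$; but these are standard facts about Leavitt path algebras that I would cite from \cite{AArS} or \cite{AR1}.
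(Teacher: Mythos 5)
Your overall plan (a)$\Rightarrow$(b)$\Rightarrow$(c)$\Rightarrow$(a) is the paper's, and your (b)$\Rightarrow$(c) and (c)$\Rightarrow$(a) steps are essentially the paper's arguments. The genuine gap is in (a)$\Rightarrow$(b), in the bound on the number of distinct vertices in a path. Your chain $\epsilon_0 L\supseteq \epsilon_1L\supseteq\cdots$ built from the initial subpaths is \emph{not} strictly decreasing in general: if $v_{i-1}=s(e_i)$ is a regular vertex emitting only the edge $e_i$, then CK-2 gives $e_ie_i^{\ast}=r(e_{i-1})$, whence $\epsilon_i=e_1\cdots e_{i-1}\,e_ie_i^{\ast}\,e_{i-1}^{\ast}\cdots e_1^{\ast}=\epsilon_{i-1}$ and your idempotent $f_i=\epsilon_{i-1}-\epsilon_i$ is zero. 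For the path $v_0\rightarrow v_1\rightarrow v_2$ in a graph where each vertex emits exactly one edge, \emph{all} the $\epsilon_i$ equal $v_0$ and all your $f_i$ vanish, although the path has three distinct vertices; so the claim that strictness follows ``precisely because the vertices are distinct'' is false, and the appeal to Proposition~\ref{Goodearl Thm}(1) with these $f_i$ collapses. Two further soft spots in the same step: producing $n+1$ nonzero orthogonal idempotents is by itself no contradiction to index $n$ (an infinite product of copies of $K$ has index $1$), and knowing $f_{i-1}Lf_i\neq 0$ for each consecutive pair does not produce a nonzero element of the full product $f_1Lf_2L\cdots f_{n+1}$, which is what Proposition~\ref{Goodearl Thm}(1) is about. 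The paper's argument is much simpler and avoids all of this: the $n+1$ distinct vertices $v_1,\dots,v_{n+1}$ occurring on the path are themselves pairwise orthogonal idempotents, and writing $p=p_1p_2\cdots p_{n+1}$ with $s(p_i)=v_i$ exhibits $0\neq p=v_1p_1\cdot v_2p_2\cdots v_{n+1}p_{n+1}\in v_1Lv_2L\cdots v_{n+1}L$, contradicting Proposition~\ref{Goodearl Thm}(1) directly.

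For the bound on the number of paths ending at $v$, your route is genuinely different from the paper's: you argue directly with the orthogonal idempotents $\mu_j\mu_j^{\ast}$ and Proposition~\ref{Goodearl Thm}(2), whereas the paper derives the bound by realizing $L/Q$ for each graded prime $Q$ as $M_{\Lambda}(K)$ or $M_{\Lambda'}(K[x,x^{-1}])$ and comparing indices of nilpotence of matrix subrings. Your version is cleaner, but the orthogonality claim ``being an initial segment forces equality since both end at $v$'' fails whenever there is a closed path based at $v$: the trivial path $v$ and a cycle $c$ at $v$ are distinct paths ending at $v$ with one an initial segment of the other, and then $\mu_j\mu_j^{\ast}\mu_k\mu_k^{\ast}=\mu_k\mu_k^{\ast}\neq 0$. (In that situation there are in fact infinitely many paths ending at $v$ in the literal sense; the count that the theorem and the cited structure results actually use is of paths not traversing the entire cycle, and for those your orthogonality does hold because the cycle has no exit.) So this half is salvageable with the correct interpretation, but as written it skips the cycle case. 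The first half, however, needs to be replaced by the paper's vertex-idempotent argument.
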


\begin{proof}
Assume (a). Suppose, by way of contradiction, there is a path $p$ that
contains $n+1$ distinct vertices $v_{1},\cdot\cdot\cdot,v_{n},v_{n+1}$
occurring in this order in the path $p$ with $v_{1}=s(p)$. So we can write $p$
as a concatenation of paths $p=p_{1}p_{2}\cdot\cdot\cdot p_{n+1}$ such that
$v_{i}=s(p_{i})$ for $i=1,\cdot\cdot\cdot,n+1$. Now $v_{1}L,\cdot\cdot
\cdot,v_{n+1}L$ are $n+1$ independent right ideals of $L$ and so, by Theorem
7.2 in \cite{Goodearl}, their product $v_{1}L\cdot v_{2}L\cdot\cdot\cdot
v_{n+1}L=0$. This is a contradiction, since
\[
0\neq p=p_{1}p_{2}\cdot\cdot\cdot p_{n+1}=v_{1}p_{1}\cdot v_{2}p_{2}\cdot
\cdot\cdot v_{n+1}p_{n+1}\in v_{1}L\cdot v_{2}L\cdot\cdot\cdot v_{n+1}L.
\]

Let $Q=I(H,S)$ be a graded prime ideal of $L$, where $H=Q\cap E^{0}$ and
$S\subseteq B_{H}$.

Suppose $E\backslash(H,S)$ contains a sink $w$. By Proposition
\ref{bddIdx = > NE}, no cycle in $E$ and hence in $E\backslash(H,S)$ has an
exit. As $E\backslash(H,S)^{0}$ is downward directed, $w$ is the only sink in
$E\backslash(H,S)$ and $u\geq w$ for every $u\in E\backslash(H,S)^{0}$. This
implies that $E\backslash(H,S)$ must be acyclic. Thus every path in
$E\backslash(H,S)$ has no repeated vertices and as there are at most $n$
distinct vertices in any path in $E\backslash(H,S)$, we conclude that the
length of any path in $E\backslash(H,S)$ is at most $n$. We then appeal to
Proposition 3.5 in \cite{AAS-1} to conclude that $L/Q\cong L_{K}%
(E\backslash(H,S))\cong M_{\Lambda}(K)$, where $\Lambda$ denotes the set of
distinct paths that end at $w$ (including $w$). If $|\Lambda|>n$, then
$M_{\Lambda}(K)$ will contain, as a subring, the matrix ring $M_{t}(K)$ for
some integer $t>n$. As $M_{t}(K)$ has index of nilpotency $t>n$, this
contradicts the fact that $L/Q$ has bounded index of nilpotence $n$. Thus
$|\Lambda|=r\leq n$. This means the number of distinct paths that end at $w$
(including $w$) is $r\leq n$. Now for any vertex $v$ in $E\backslash(H,S)$,
since there is a path from $v$ to $w$, every path that ends at $v$ extends to
a path that end at $w$. Hence the number of distinct paths in $E\backslash
(H,S)$ that end at $v$ (including $v$) is also less than or equal to $n$. Since these are
precisely the paths in $E$ that end at $v$, we conclude that the number of
distinct paths in $E$ that end at $v$ is at most $n$.

Suppose, $E\backslash(H,S)$ does not contain any sink. Since every path
contains at most $n$ distinct vertices and since no cycle has an exit, every
path in $E\backslash(H,S)$ eventually ends at the base $u$ of a cycle $c$
without exits. By downward directness, $C$ is the only cycle in $E\backslash
(H,S)$ and $u^{\prime}\geq u$ for every $u^{\prime}\in E\backslash(H,S)^{0}$.
We then appeal to Proposition 3.4 of \cite{AAS} to conclude that $L/Q\cong
L_{K}(E\backslash(H,S))\cong M_{\Lambda^{\prime}}(K[x,x^{-1}])$ where
$\Lambda^{\prime}$ is the set of all distinct paths that end at $v$ (including
$v$), but do not contain all the edges in the cycle $c$. As argued in the
preceding paragraph, if $|\Lambda^{\prime}|=r>n$ it will contradict the fact
that $n$ is the bound for nilpotent index for $L$. This means the number $r$
of distinct paths in $E\backslash(H,S)$ that end at $u$ (including $u$) is
less than or equal to $n$. Thus for any vertex $v\in E\backslash(H,S)$, since there is a path
from $v$ to $u$, the number of distinct paths in $E\backslash(H,S)$ (and
therefore in $E$) that end at $v$ (including $v$) is less than or equal to $n$. This
proves (b).

Assume (b). Let $Q=I(H,S)$ be a graded prime ideal of $L$ where $H=Q\cap
E^{0}$ and $S\subseteq B_{H}$. Since $E\backslash(H,S)$ is downward directed,
it contains at most one sink and also at most one cycle, as no cycle has an
exit. Since the number of distinct vertices in any path is less than or equal to $n$, every
path in $E\backslash(H,S)$ must end either at a sink or at a cycle. If
$E\backslash(H,S)$ contains a sink $w$, then $u\geq w$ for every vertex $u\in
E\backslash(H,S)$ and since the number of distinct paths that end at $w$
(including $w$) is less than or equal to $n$, $L/Q\cong L_{K}(E\backslash(H,S))\cong$
$M_{t}(K)$ for some $t\leq n$, by Proposition 3.4 of \cite{AAS}. In (Theorem
4.14,\cite{H-1}) it was shown that this isomorphism induces a graded
isomorphism with $M_{t}(K)$ given the usual grading of matrix rings
(\cite{H-1}). If $E\backslash(H,S)$ does not contain a sink, then necessarily
every path in $E\backslash(H,S)$ ends at a unique cycle $c$ without exits
based at a vertex $v$. Since the number of distinct paths that end at $v$ is
less than or equal to $n$, we appeal to Proposition 3.5 in \cite{AAS-1} to conclude that
$L/Q\cong L_{K}(E\backslash(H,S))\cong$ $M_{t}(K[x,x^{-1}])$ for some $t\leq
n$. Again, by (Theorem 4.20, \cite{H-1}), the last isomorphism induces a
graded isomorphism with $M_{t}(K[x,x^{-1}])$ given the usual grading of
matrices (\cite{H-1}). This proves (c).

Assume (c). Now the intersection $%
{\displaystyle\bigcap}
\{Q:Q$ a graded prime ideal of $L\}=0$, since $%
{\displaystyle\bigcap}
\{P:P$ a prime ideal of $L\}=0$ and every prime ideal $P$ contains a graded
prime ideal $gr(P)$ (see \cite{R}). Thus $L$ is a subdirect product of
$\{L/Q:Q$ a graded prime ideal of $L\}$. For each graded prime ideal $Q$, we
have, by hypothesis, $L/Q\cong$ $M_{t}(K)$ or $M_{t}(K[x,x^{-1}])$ with $t\leq
n$ and so $L/Q$ has index of nilpotence less than or equal to $n$. Then $L$, being a
subdirect product of the rings $L/Q$ for various $Q$, also has index
of nilpotence less than or equal to $n$. This proves (a).
\end{proof}

\begin{remark} \rm
\label{grPrime => grMax} Now, as graded rings, both $M_{t}(K)$ and
$M_{t}(K[x,x^{-1}])$ with $t\leq n$ are graded direct sums of finitely many
isomorphic graded simple modules and so both are graded artinian and simple.
In view of this, Condition (c) of Theorem \ref{bdd index} allows an interesting characterization which we state below.
\end{remark}

\begin{theorem}
\label{Condn c Reform} A Leavitt path algebra $L$ has bounded index of
nilpotence if and only if, for every graded prime ideal $Q$ of $L$, $L/Q$ is a
graded simple (graded) artinian ring.
\end{theorem}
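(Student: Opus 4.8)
The plan is to derive this equivalence directly from Theorem \ref{bdd index} together with the structural remark (Remark \ref{grPrime => grMax}). The key point is that the quotients $M_t(K)$ and $M_t(K[x,x^{-1}])$ appearing in condition (c) are, as graded rings, graded simple and graded artinian, while conversely any graded prime ideal $Q$ with $L/Q$ graded simple artinian must be of this special form. Thus the proof splits into two implications.

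For the forward direction, suppose $L$ has bounded index of nilpotence. By Theorem \ref{bdd index}, condition (c) holds, so for every graded prime ideal $Q$ of $L$ we have $L/Q \cong_{gr} M_t(K)$ or $L/Q \cong_{gr} M_t(K[x,x^{-1}])$ for some $t \leq n$, under the usual grading of matrices. I would then invoke the standard fact (cited in Remark \ref{grPrime => grMax}, with references \cite{H-1}, \cite{HR}, \cite{NV}) that under the usual matrix grading, both $M_t(K)$ and $M_t(K[x,x^{-1}])$ decompose as a graded direct sum of finitely many mutually graded-isomorphic graded simple left (and right) modules, hence are graded artinian; and that they have no nontrivial graded two-sided ideals, hence are graded simple rings. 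Therefore $L/Q$ is graded simple and graded artinian.

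For the converse, suppose that for every graded prime ideal $Q$ of $L$, the quotient $L/Q$ is graded simple and graded artinian. I would argue that the graded artinian condition forces the graph $E \backslash (H,S)$ (where $H = Q \cap E^0$ and $Q = I(H,S)$) to satisfy the finiteness constraints of condition (b): since $E^0 \backslash H$ is downward directed and $L/Q$ is graded artinian, one first rules out infinitely long paths (an infinite strictly descending path of vertices would yield a strictly descending chain of graded right ideals $v_1 L/Q \supsetneq v_1 L/Q \cap (\text{ideal gen.\ by } v_2) \supsetneq \cdots$, or more cleanly, an infinite set of orthogonal graded idempotents), and one rules out a cycle with an exit and unbounded path-counts at vertices similarly. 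Better, I would phrase it as: a graded artinian Leavitt path algebra quotient of this downward-directed form is forced by the known structure theory (Propositions 3.4 of \cite{AAS} and 3.5 of \cite{AAS-1}, together with the acyclicity/single-cycle dichotomy used in the proof of Theorem \ref{bdd index}) to be graded isomorphic to $M_t(K)$ or $M_t(K[x,x^{-1}])$ for some \emph{finite} $t$; and then the graded artinian condition for this $L/Q$ gives a finite bound on $t$ \emph{for that $Q$}. The subtle point — and the step I expect to be the main obstacle — is obtaining a \emph{uniform} bound $n$ on the $t$'s as $Q$ ranges over all graded prime ideals, purely from the pointwise ``graded simple artinian'' hypothesis; this presumably requires translating back to the graph via Theorem \ref{bdd index}(b)$\Leftrightarrow$(c), i.e.\ showing the pointwise condition already implies the graphical bounds in (b) with a common $n$ (for instance, if arbitrarily long paths existed in $E$, one produces a graded prime quotient that is not artinian, contradiction; and if path-counts at vertices were unbounded across $E$, one produces graded prime quotients $M_{t}(K)$ with $t$ unbounded, but each individual one is still artinian, so the uniform bound must be extracted from the graph-level statement, not from any single quotient).

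Once the graphical condition (b) is established from the converse hypothesis, Theorem \ref{bdd index} immediately yields that $L$ has bounded index of nilpotence, completing the proof. I would write the argument concisely: forward direction is essentially ``condition (c) $+$ known graded structure of matrix rings''; backward direction is ``graded artinian $+$ downward directed $\Rightarrow$ matrix ring of finite size'' at each $Q$, combined with a graph-theoretic argument (mirroring the (a)$\Rightarrow$(b) part of Theorem \ref{bdd index}) to get the uniform bound, then (b)$\Rightarrow$(a) of Theorem \ref{bdd index}. The only genuinely new content beyond Theorem \ref{bdd index} is Remark \ref{grPrime => grMax}'s observation in one direction and the finite-size deduction from graded artinianity in the other.
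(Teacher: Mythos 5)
Your forward direction is exactly the paper's argument: Theorem~\ref{bdd index}(a)$\Rightarrow$(c) together with the observation of Remark~\ref{grPrime => grMax} that $M_t(K)$ and $M_t(K[x,x^{-1}])$, with the usual matrix grading, are graded simple and graded artinian. Nothing more is needed there, and the paper in fact supplies no proof beyond that remark, so you have not overlooked a hidden argument for the converse.

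The obstacle you flag in the backward direction is a genuine gap, and it cannot be closed: the pointwise hypothesis ``$L/Q$ is graded simple graded artinian for every graded prime $Q$'' does not force a uniform bound on the matrix sizes, and hence does not imply bounded index of nilpotence. Concretely, let $E$ be the disjoint union over all $n\geq 1$ of the line graphs $A_n$ ($n$ vertices and $n-1$ edges arranged in a row). Then $L_K(E)\cong\bigoplus_{n\geq 1}M_n(K)$; the maximal tails of $E$ are precisely the component vertex sets $A_n^0$, so by Lemma~\ref{ml} every graded prime quotient of $L_K(E)$ is some $M_n(K)$, which is graded simple and graded artinian --- yet $L_K(E)$ has unbounded index of nilpotence. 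This example also defeats the specific repair you sketch: arbitrarily long paths in $E$ need not produce any single non-artinian graded prime quotient, since the long paths may be distributed over different quotients, each individually of finite size. Your other observation --- that graded artinianity forces each individual $L/Q$ to be $M_{t_Q}(K)$ or $M_{t_Q}(K[x,x^{-1}])$ with $t_Q$ finite --- is correct, but the uniformity is irrecoverably lost in passing from condition (c) of Theorem~\ref{bdd index} to the statement of Theorem~\ref{Condn c Reform}. A correct reformulation must retain the bound on the right-hand side, e.g.\ ``there is a positive integer $n$ such that for every graded prime ideal $Q$, $L/Q$ is a graded simple graded artinian ring of graded composition length at most $n$''; with that amendment both of your directions go through exactly as outlined. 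As stated, only the ``only if'' half of the theorem is provable.
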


In the case when the graph $E$ is row-finite, we obtain a stronger version of
Theorem \ref{bdd index}, thus leading to a structure theorem.

\begin{theorem}
\label{bddIdx-row-finte} Let $E$ be row-finite graph. Then the following
properties are equivalent for $:=L_{K}(E)$;

(a) $L$ has index of nilpotence at most $n$;

(b) There is a fixed positive integer $n$ and an isomorphism%

\[
L\cong%
{\displaystyle\bigoplus\limits_{i\in I}}
M_{n_{i}}(K)%
{\displaystyle\bigoplus}
{\displaystyle\bigoplus\limits_{j\in J}}
M_{n_{j}}(K[x,x^{-1}])
\]
where $I$ is the set of all sinks in $E$ and $J$ is the set of cycles without
exits in $E$ and, for all $i\in I,j\in J$, $n_{i}\leq n$ and $n_{j}\leq n$.
Thus, in particular, $L$ is graded semi-simple (that is, a direct sum of
graded simple left/right ideals).
\end{theorem}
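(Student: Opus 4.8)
The plan is to derive part (b) of Theorem \ref{bddIdx-row-finte} from Theorem \ref{bdd index} together with the known structure of Leavitt path algebras of acyclic/``no-exit'' row-finite graphs. The key observation is that for a row-finite graph $E$, having index of nilpotence $n$ forces $E$ to be a very restricted graph: by Proposition \ref{bddIdx = > NE} no cycle has an exit, and by Theorem \ref{bdd index}(b) every path in $E$ has at most $n$ distinct vertices. Since $E$ is row-finite, each vertex is either a sink or a regular vertex, so using the CK-2 relation repeatedly one sees that every vertex $v$ of $E$ connects (via a path) to either a sink or the base of a (unique, exit-free) cycle; moreover the bound on the number of distinct vertices in a path prevents any infinite forward path that does not eventually loop on a cycle. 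Thus $E$ decomposes, up to the structure of $L_K(E)$, into ``pieces'' each of which is the union of paths terminating at a fixed sink or at a fixed no-exit cycle.

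First I would make the decomposition of $L$ precise. Let $I$ be the set of sinks of $E$ and $J$ the set of (equivalence classes of) cycles without exits in $E$. For each $w\in I$ let $H_w$ be the hereditary saturated closure of $\{w\}$, and for each cycle $c\in J$ based at $v$ let $H_c$ be the hereditary saturated closure of $\{v\}$; because no cycle has an exit and paths have bounded length, these hereditary saturated subsets are pairwise disjoint and their union is all of $E^0$ (every vertex flows down to exactly one sink or one no-exit cycle). The ideals $I(H_w,\varnothing)$ and $I(H_c,\varnothing)$ are then graded ideals of $L$ whose sum is all of $L$ and which form an internal direct sum; hence $L=\bigoplus_{w\in I} I(H_w) \oplus \bigoplus_{c\in J} I(H_c)$ as graded ideals. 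Next I would identify each summand: $I(H_w)$ is (graded isomorphic to) $L_K(E_w)$ where $E_w$ is the restriction of $E$ to $H_w$, a finite acyclic graph with unique sink $w$; by Proposition 3.4/3.5 of \cite{AAS}, \cite{AAS-1} (as already invoked in the proof of Theorem \ref{bdd index}) this is $M_{n_w}(K)$ with $n_w$ the number of paths ending at $w$, and by (Theorem 4.14, \cite{H-1}) this is a graded isomorphism for the usual matrix grading. Similarly $I(H_c)\cong_{gr} M_{n_c}(K[x,x^{-1}])$. Finally, Theorem \ref{bdd index}(b) gives $n_w,n_c\le n$ for all $w,c$, which yields (b); and since each matrix ring $M_t(K)$ or $M_t(K[x,x^{-1}])$ with the usual grading is a graded direct sum of finitely many graded simple one-sided ideals (Remark \ref{grPrime => grMax}), $L$ is graded semisimple. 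The converse (b)$\implies$(a) is immediate: each summand has index of nilpotence $\le n$, so the direct sum does too.

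The main obstacle I anticipate is justifying the graph-theoretic decomposition $E^0=\bigsqcup_w H_w \sqcup \bigsqcup_c H_c$ and the corresponding internal direct sum decomposition of $L$ into the graded ideals $I(H_w)$, $I(H_c)$. One must check: (i) each $H_w$ (resp.\ $H_c$) is hereditary and saturated and that distinct ones are disjoint --- disjointness uses that no vertex can reach two different sinks or two different no-exit cycles, which follows from row-finiteness plus ``no cycle has an exit'' plus the bounded path-length condition (an infinite forward path with no repeated vertices is impossible when paths have $\le n$ vertices, and without an exit a cycle cannot branch); (ii) $\bigcup H_w \cup \bigcup H_c = E^0$, i.e.\ every vertex does reach a sink or a no-exit cycle --- again forced by row-finiteness and the length bound; (iii) the graded ideals $I(H_w)$ and $I(H_c)$ are mutually orthogonal and sum to $L$, which follows from the fact that a sum of hereditary saturated sets corresponds to the sum of the associated graded ideals and here there are no breaking vertices (row-finite) so the admissible pairs are just $(H,\varnothing)$. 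Since the graph is row-finite, all the delicate breaking-vertex phenomena disappear, so this is mostly bookkeeping, but it is the step that requires care. The identification of the individual summands with matrix rings is then a direct citation of results already used in the proof of Theorem \ref{bdd index}, so no new work is needed there.
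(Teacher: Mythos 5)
Your overall strategy---decompose $L$ as an internal direct sum of the graded ideals attached to the sinks and to the no-exit cycles, then identify each summand with a matrix ring over $K$ or $K[x,x^{-1}]$---is the right one, and it is essentially the content of the results the paper simply cites: after invoking Theorem~\ref{bdd index}, the paper's proof of (a)$\Rightarrow$(b) is a one-line appeal to Theorem 3.7 and Propositions 3.5 and 3.6 of \cite{AAS-1}, and your argument amounts to unrolling those citations. The (b)$\Rightarrow$(a) direction matches the paper exactly. So the route is not genuinely different; the issue is that several of the specific graph-theoretic claims you use to carry out the ``bookkeeping'' are false as stated.

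Take $E$ to be a single vertex $v$ emitting two edges $e_1,e_2$ to two distinct sinks $w_1,w_2$; here $L\cong M_2(K)\oplus M_2(K)$ and the index of nilpotence is $2$. First, $v$ reaches two different sinks, so ``every vertex flows down to exactly one sink or one no-exit cycle'' is wrong, and your stated reason for disjointness of the $H_w$ fails (the saturated closures are in fact pairwise disjoint, but because a vertex of $H_w$ has \emph{all} of its outgoing paths terminating at $w$, not because it can reach only one sink). Second, $H_{w_1}=\{w_1\}$ and $H_{w_2}=\{w_2\}$ since $v$ does not saturate into either, so $\bigcup_w H_w\neq E^0$; what is true---and what actually consumes the row-finiteness and bounded-path-length hypotheses---is that the hereditary saturated closure of the \emph{union} of all sinks and no-exit-cycle vertices is $E^0$, whence $\sum_w I(H_w)+\sum_c I(H_c)=I(E^0)=L$. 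Third, $I(H_{w_1})\cong M_2(K)$ while $L_K(E|_{H_{w_1}})=K$, so the identification ``$I(H_w)\cong L_K(E_w)$ with $E_w$ the restriction of $E$ to $H_w$'' is incorrect: the ideal generated by $H_w$ is spanned by the elements $\alpha\beta^*$ with $r(\alpha)=r(\beta)=w$, where $\alpha,\beta$ may pass through vertices \emph{outside} $H_w$, and these form a set of matrix units indexed by all paths of $E$ ending at $w$. Each of these points is repairable (orthogonality of the ideals comes from $\beta^*\gamma=0$ when $\beta,\gamma$ end at distinct sinks or distinct exit-free cycles; completeness comes from the saturation argument just indicated; the matrix identification is the standard matrix-unit computation), but as written the decomposition you defer to as ``mostly bookkeeping'' does not go through.
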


\begin{proof}
Assume (a). By Theorem \ref{bdd index}, no cycle in $E$ has an exit and every
path in $E$ contains at most $n$ distinct vertices. This means that every path
eventually ends at a sink or at a vertex on a cycle without exits. Since the
number of distinct paths that end at any vertex in $E$ is less than or equal to $n$, we appeal
to Theorem 3.7 and Propositions 3.5 and 3.6 of \cite{AAS-1} to conclude that
\[
L\cong%
{\displaystyle\bigoplus\limits_{i\in I}}
M_{n_{i}}(K)%
{\displaystyle\bigoplus}
{\displaystyle\bigoplus\limits_{j\in J}}
M_{n_{j}}(K[x,x^{-1}])
\]
where $n_{i}$ and $n_{j}$ are less than or equal to $n$. This proves (b).

Now (b)$\implies$(a) follows from the fact that when $n_{i}$ and $n_{j}$ are less than or equal to $n$, both the
matrix rings $M_{n_{i}}(K)$ and $M_{n_{j}}(K[x,x^{-1}])$ have index of
nilpotence less than or equal to $n$.
\end{proof}

As a consequence of the Theorem \ref{bdd idx}, we have the following for graded simple modules over a Leavitt path algebra with bounded index of nilpotence. 
 
\begin{prop}
\label{bdd idx = > graded sigmaV} If a Leavitt path algebra $L$ has bounded
index of nilpotence, then $L$ is a graded $\Sigma$-$V$ ring, that is, each graded simple module over $L$ is graded $\Sigma$-injective. But the converse does
not hold.
\end{prop}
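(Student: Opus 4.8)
The plan is to leverage the structure theory just established: if $L$ has bounded index of nilpotence, then by Theorem~\ref{Condn c Reform} (equivalently Theorem~\ref{bdd index}(c)), $L$ is a subdirect product of the graded simple graded artinian rings $L/Q$ as $Q$ ranges over the graded prime ideals of $L$, with each $L/Q$ graded isomorphic to $M_t(K)$ or $M_t(K[x,x^{-1}])$ for some $t\le n$. First I would recall that a graded simple module $S$ over $L$ is annihilated by a graded primitive (hence graded prime) ideal $Q$, so $S$ is naturally a graded simple module over the graded simple graded artinian ring $\bar L:=L/Q$. Over such a ring every graded module is a graded direct sum of copies of the (unique up to shift) graded simple modules, and in particular $\bar L$ is graded self-injective and graded noetherian; since $\bar L \cong_{gr} \bigoplus$ (finitely many shifts of $S$), the module $S$ is a graded direct summand of $\bar L$, hence graded injective as a $\bar L$-module. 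Moreover $S^{(\alpha)}$ is again a graded direct summand of $\bar L^{(\alpha)}$, which is graded injective as a $\bar L$-module because $\bar L$, being graded artinian (in particular graded noetherian), is graded $\Sigma$-injective over itself. Thus $S^{(\alpha)}$ is graded injective as a $\bar L$-module.

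The second step is to pass from graded injectivity over $\bar L$ to graded injectivity over $L$. Here I would invoke the standard fact (e.g.\ from \cite{NV}) that if $Q$ is a graded ideal of a graded ring $L$ and $M$ is a graded $L/Q$-module which is graded injective as an $L/Q$-module, then $M$ is graded injective as an $L$-module: this is the graded analogue of the change-of-rings result, using that $L/Q$ is graded flat as a right module over itself and that graded injectivity can be tested via the graded Baer criterion against graded left ideals of $L$, which factor through $L/Q$ once they annihilate $Q$ on the relevant module. Combining the two steps, $S^{(\alpha)}$ is graded injective as an $L$-module for every cardinal $\alpha$, i.e.\ $S$ is graded $\Sigma$-injective, and hence $L$ is a graded $\Sigma$-$V$ ring.

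Finally, for the failure of the converse I would exhibit an explicit graph $E$ with $L_K(E)$ a graded $\Sigma$-$V$ ring but not of bounded index of nilpotence. The natural candidate is a graph producing $L\cong\bigoplus_{i\in\NN} M_i(K)$, for instance a disjoint union of acyclic graphs $E_i$ where $E_i$ is the ``line'' $v_1^{(i)}\to v_2^{(i)}\to\cdots\to v_i^{(i)}$ with a sink at the end, so that $L_K(E_i)\cong M_i(K)$; one checks directly that every graded simple $L$-module lives over a single summand $M_i(K)$, is graded injective there, and remains graded injective over $L$ since the summands are graded ideals, and the same for arbitrary direct powers. Yet the nilpotent index of $M_i(K)$ grows without bound as $i\to\infty$, so $L$ has unbounded index of nilpotence. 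The main obstacle I anticipate is the careful justification of the graded change-of-rings step for injectivity over a non-unital graded ring without identity; I would handle this by working with local units, noting that $L$ (and each $L/Q$) has a set of local units given by finite sums of the vertex idempotents, and that the relevant graded Hom and Baer-criterion arguments go through verbatim in the setting of rings with local units, as is standard in the Leavitt path algebra literature (\cite{AArS}, \cite{T}).
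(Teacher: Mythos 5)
Your overall architecture matches the paper's: pass to $\bar L=L/Q$ where $Q=\mathrm{ann}_L(S)$ is a graded prime ideal, use Theorem~\ref{bdd index}(c) to see that $\bar L$ is graded semisimple so that $S^{(\alpha)}$ is graded injective over $\bar L$, and then transfer injectivity back to $L$. The gap is in the transfer step. The ``standard fact'' you invoke --- that a graded $L/Q$-module which is graded injective over $L/Q$ is automatically graded injective over $L$ for an \emph{arbitrary} graded ideal $Q$ --- is false as stated: already in the ungraded unital case, $\mathbb{Z}/p\mathbb{Z}$ is injective over $\mathbb{Z}/p\mathbb{Z}$ but not over $\mathbb{Z}$. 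Your justification does not repair this: ``$L/Q$ is graded flat as a right module over itself'' is vacuous (the relevant hypothesis in the injective producing lemma is that $L/Q$ be flat as an $L$-module), and the Baer-criterion sketch breaks down because a graded map $f:I\to M$ with $MQ=0$ only kills $IQ$, not $I\cap Q$, so it need not descend to a map defined on a right ideal of $L/Q$ (in the $\mathbb{Z}$ example take $I=p\mathbb{Z}$ and $f(p)=\bar 1$). What actually makes the step work here, and what the paper uses, is that every Leavitt path algebra is graded von Neumann regular \cite{H}, so $L/Q$ is graded flat as an $L$-module and the graded version of the injective producing lemma (Chapter I, Corollary 3.6A of \cite{Lam}) applies. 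Alternatively one could argue that graded ideals of $L_K(E)$ are rings with local units, which forces $I\cap Q=IQ$ for every right ideal $I$ and hence the flatness of $L/Q$ over $L$; but some such input specific to Leavitt path algebras is indispensable, and it is missing from your argument.

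The rest is fine. The reduction to $\bar L$ and the graded semisimplicity of $M_t(K)$ and $M_t(K[x,x^{-1}])$ with $t\le n$ are exactly as in the paper. Your counterexample $\bigoplus_{i\in\mathbb{N}}M_i(K)$, realized by a disjoint union of line graphs, is correct and genuinely different from the paper's (which uses $M_\Lambda(K[x,x^{-1}])$ for an infinite set $\Lambda$, realized by two cycles joined by an edge); here the transfer of injectivity is unproblematic because each $M_i(K)$ is a unital ring direct summand, and your example has the mild additional virtue of being an ungraded $\Sigma$-$V$ ring as well.
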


\begin{proof}
Suppose $S$ be a graded simple right $L$-module. Then $P=ann_{L}(S)$ is a
graded prime ideal of $L$. By Theorem \ref{bdd index}, there is a graded
isomorphism $L/P\cong M_{t}(K)$ or $M_{t}(K[x,x^{-1}])$ for some $t\leq n$
with the matrices $M_{t}(K)$ and $M_{t}(K[x,x^{-1}])$ being given the matrix
gradings as indicated in \cite{H-1}. As $L/P$ is graded semi-simple (that is
direct sum of graded simple modules), $S$ is $\Sigma$-injective as a graded
right $L/P$-module. As $L$ is graded von Neumann regular, L/P is flat as an
$L$-module. Then, using the graded version of the well-known injective
producing lemma (see Chapter I, Corollary 3.6A in \cite{Lam}), $S$ is graded
$\Sigma$-injective as a right $L$-module. This proves that $L$ is a graded
$\Sigma$-$V$ ring.

Now for an infinite set $\Lambda$, $M_{\Lambda}(K[x,x^{-1}])$ is a graded
semi-simple ring (that is, a graded direct sum of graded simple modules)
(\cite{HR}) and so is a graded $\Sigma$-$V$ ring. But it does not have bounded
index of nilpotence as it includes subrings isomorphic to $M_{n}(K[x,x^{-1}])$
for various positive integers $n$.
\end{proof}

We give below an example of a Leavitt path algebra $L$ which is graded $\Sigma$-$V$ ring, but $L$ does not have bounded index of nilpotence.

\begin{example} \rm
\label{bdd idx no sigmaV} Consider the following graph $F$ consisting of two
cycles $g$ and $c$ connected by an edge:%

\[%
\begin{array}
[c]{ccccccc}%
\bullet & \longrightarrow & \bullet &  & \bullet & \longrightarrow & \bullet\\
\uparrow & g & \downarrow &  & \uparrow & c & \downarrow\\
\bullet & \longleftarrow & \bullet & \longrightarrow & \bullet_{v} &
\longleftarrow & \bullet
\end{array}
\]

\noindent Now $F$ is downward directed, $c$ is a cycle without exits and the various
powers of the cycle $g$ give rise to infinitely many distinct paths that end
at the base $v$ of the cycle $c$. Hence $L_{K}(F)\cong M_{\infty}%
(K[x,x^{-1}])$ (by Proposition 3.4 of \cite{AAS}) and is graded semi-simple.
Hence each graded simple module over $L_{K}(F)$ is graded $\Sigma$-injective. But $L_{K}(F)$ does not have
bounded index of nilpotence, as $M_{\infty}(K[x,x^{-1}])$ contains subrings
isomorphic to $M_{n}(K[x,x^{-1}])$ for every positive integers $n$.
\end{example}

\begin{remark} \rm
It was shown in \cite{V} that if no cycle in a graph $E$ has an exit, then the
Leavitt path algebra $L:=L_{K}(E)$ is directly-finite. In view of Theorem
\ref{bdd index}, we thus conclude that if $L$ has bounded index of nilpotence,
then $L$ is directly-finite. 
\end{remark}

The next example shows that if a Leavitt path algebra is directly-finite, it need
not have bounded index of nilpotence.

\begin{example} \rm
\label{DF need not have bddIdx} Let $E$ be a graph consisting of infinitely
many edges $\{g,e_{1},\cdot\cdot\cdot,e_{n},\cdot\cdot\cdot\}$ such that
$v=s(g)=r(g)=r(e_{i})$ and $s(e_{i})=v_{i}$ for all $i\geq1$. Since the only
cycle (loop) $g$ has no exit, $L_{K}(E)$ is directly-finite. But $L_{K}(E)$
does not have bounded index of nilpotence since $L_{K}(E)\cong M_{\infty
}(K[x,x^{-1}])$.
\end{example}

\section{Characterization of $\Sigma$-$V$ Leavitt path algebras}

\bigskip

\noindent In this section we provide both graphical and algebraic characterizations for each simple module over a Leavitt path algebra $L$ to be $\Sigma$-injective. In this case, $L$ becomes
von Neumann regular directly finite of bounded nilpotent index and is a
subdirect product of matrix rings of finite order. 

Recall that a ring $R$ is called a right $V$\textit{-ring} if each simple right $R$-module
is injective \cite{Vil}. It is a well-known that a commutative ring is von
Neumann regular if and only if it is a $V$-ring. However, in the case of
noncommutative setting, the classes of von Neumann regular rings and $V$-rings
are quite independent. A ring $R$ is called a right $\Sigma$\textit{-}%
$V$\textit{ ring} if each simple right $R$-module is $\Sigma$-injective (see
\cite{GV}, \cite{Baccella} and \cite{AS}). Recall that a module $M$ is called
$\Sigma$\textit{-injective }if $M^{(\alpha)}$ is injective for any cardinal
$\alpha$. It may be noted here that the direct sums of injective modules provide a great deal of information about the behavior of ring. For example, it is known that a ring $R$ is right noetherian if and only if each injective right $R$-module is $\Sigma$-injective. Clearly, a $\Sigma$-$V$ ring is a $V$-ring, however, there are
examples of $V$-rings that are not $\Sigma$-$V$ rings.

In order to characterize $\Sigma$-$V$ Leavitt path algebras, we begin with a series of preparatory propositions and lemmas. 

The proposition below is proved for rings with identity in \cite{AS}, but we give here a much shorter argument and we prove it more generally for rings with local units. Recall that a ring $R$ with identity is called directly-finite if for each $a, b\in R$, $ab=1$ implies $ba=1$. Motivated by this, a ring $R$ with local units is said to be \textit{directly-finite} if for
every $a,b\in R$ and an idempotent $u\in R$ such that $ua=au=a$ and $ub=bu=b$,
we have that $ab=u$ implies $ba=u$. Equivalently, for every local unit $u$,
$uR$ is not isomorphic to any proper direct summand. This is same thing as
saying that the corner ring $uRu$ is a directly-finite ring with identity. As was
explained in \cite{V}, if $R$ is a directly-finite ring with identity $1$, it
is also directly finite as defined above for rings with local units.
 
\begin{prop} \label{df}
Let $R$ be a ring with local units and assume that $R$ is a right $\Sigma$-$V$ ring. Then $R$ is directly-finite.
\end{prop}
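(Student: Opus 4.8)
The plan is to argue by contradiction: suppose $R$ is a right $\Sigma$-$V$ ring with local units that is \emph{not} directly-finite. Then there is a local unit $u$ and elements $a,b$ with $ua=au=a$, $ub=bu=b$, $ab=u$ but $ba \neq u$. Setting $e = ba$, one checks that $e$ is an idempotent with $e \neq u$, that $u - e$ is a nonzero idempotent, and that left multiplication by $a$ and right multiplication by $b$ set up an isomorphism $uR \cong eR$ of right $R$-modules, so that $uR \cong eR$ is a proper direct summand of itself. Iterating, one produces inside $uR$ an infinite direct sum $\bigoplus_{n\geq 1} A_n$ of nonzero pairwise isomorphic right ideals (the standard "infinite idempotents" configuration coming from a non-Dedekind-finite corner). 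This is the structural input I would extract first.

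Next I would bring in the $\Sigma$-$V$ hypothesis to derive a contradiction. Pick a maximal right ideal $M$ of $uRu$ (or work with $uR$ and a maximal submodule) so that $S := uR/M$ is a simple right $R$-module; since $R$ is a right $\Sigma$-$V$ ring, $S^{(\alpha)}$ is injective for every cardinal $\alpha$. The classical fact I would invoke is that over a $\Sigma$-$V$ ring the simple modules are $\Sigma$-injective, hence any ring which is a $\Sigma$-$V$ ring has the property that it cannot contain certain infinite independent families; more precisely, I would use that $\Sigma$-injectivity of $S$ forces a chain condition (a finiteness) on quotients, or directly that an infinite direct sum of isomorphic copies of a cyclic module mapping onto $S$ would force $S^{(\omega)}$ to fail to be injective. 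The cleanest route: the infinite family $\bigoplus_{n} A_n \subseteq uR$ with all $A_n \cong uR$ gives, upon factoring each $A_n$ by (the image of) $M$, a submodule of $uR$ isomorphic to $S^{(\omega)}$ sitting inside a finitely generated — indeed cyclic — module $uR$; but a $\Sigma$-injective module cannot be essential in, or be a submodule embedding into, a Noetherian-obstructed situation the way that a genuinely infinite internal direct sum of copies inside one cyclic module would require. I would phrase this using the Faith characterization that $R$ is right Noetherian iff every injective right $R$-module is $\Sigma$-injective, applied to the corner ring $uRu$: a non-directly-finite corner produces a non-finitely-generated essential socle-type phenomenon contradicting Noetherianness/$\Sigma$-injectivity of $S$.

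The main obstacle I anticipate is making the last step airtight without over-claiming. The honest technical point is this: if $uR \cong eR \oplus C$ with $eR \cong uR$, then $uR$ contains $C^{(\omega)}$-like data, and one must turn "$C \neq 0$" plus "$\Sigma$-injective simple modules" into a contradiction. The standard lemma is that if $P$ is a projective module that is isomorphic to a proper direct summand of itself, then the endomorphism ring of $P$ fails to be directly finite and $P^{(\omega)}$ embeds as a direct summand of $P^{(\omega)}$ in a way incompatible with any simple quotient being $\Sigma$-injective; concretely, choose a simple quotient $S$ of $C$ (possible since $C$ has a maximal submodule, as $C$ is a nonzero finitely generated module over a ring with local units), and observe $S^{(\omega)}$ is a homomorphic image of $C^{(\omega)}$, which is a homomorphic image of a direct summand of the cyclic module $uR$; then invoke that $S^{(\omega)}$ injective plus $S^{(\omega)}$ being "too big to be squeezed" yields the contradiction via Baer's criterion applied along the ascending chain of partial sums. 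So the plan is: (i) extract the idempotent/isomorphism data from non-direct-finiteness; (ii) build an infinite internal direct sum of isomorphic nonzero summands inside a cyclic module; (iii) pass to a simple quotient $S$ and use $\Sigma$-injectivity of $S$ to contradict this, citing the injective-producing lemma and Baer's criterion as in \cite{AS}. I would keep the argument short precisely because each of these three steps is routine once set up, and the economy over \cite{AS} comes from working directly with the cyclic module $uR$ rather than with the whole ring.
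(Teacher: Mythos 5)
Your overall strategy is the same as the paper's: extract from a non-directly-finite local unit $u$ an infinite independent family of nonzero pairwise isomorphic cyclic summands inside $uR$, pass to isomorphic simple quotients, and use $\Sigma$-injectivity of the resulting direct sum of simples to force a contradiction with cyclicity of $uR$. Steps (i) and (ii) are fine (one small slip: the independent pieces $T_n$ in the iterated decomposition $uR=S_1\oplus T_1$, $S_1=S_2\oplus T_2,\dots$ are all isomorphic to $T_1$, not to $uR$; the nested $S_n$ are the ones isomorphic to $uR$, and they are not independent).

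The genuine gap is in step (iii), and it is exactly the step you flagged as needing to be made airtight. You assert that $S^{(\omega)}$ is a homomorphic image of $C^{(\omega)}=\bigoplus_{n\ge 1}T_n$, ``which is a homomorphic image of a direct summand of the cyclic module $uR$.'' But the infinite internal direct sum $\bigoplus_{n\ge 1}T_n$ is only a submodule of $uR$, not a direct summand --- only the finite partial sums $T_1\oplus\cdots\oplus T_N$ split off. If $\bigoplus T_n$ were a direct summand you would get ``$S^{(\omega)}$ is cyclic'' with no use of injectivity at all, which should signal that something is off. The correct mechanism (the one the paper uses) is: choose the maximal submodules $M_n\subseteq T_n$ compatibly, so that $\left(\bigoplus T_n\right)/\left(\bigoplus M_n\right)\cong\bigoplus (T_n/M_n)$ is a direct sum of isomorphic simple modules, hence injective by the $\Sigma$-$V$ hypothesis; it is a \emph{submodule} of the cyclic module $uR/\left(\bigoplus M_n\right)$, and it is precisely the injectivity that upgrades ``submodule'' to ``direct summand,'' hence to a homomorphic image of a cyclic module, hence cyclic; finally, over a ring with local units an infinite direct sum of nonzero modules cannot be cyclic, since any single element has only finitely many nonzero coordinates. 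Your appeals to Faith's Noetherian characterization of $\Sigma$-injectivity, Baer's criterion, and the injective-producing lemma are red herrings here and do not substitute for this splitting argument; with the paragraph above inserted in place of them, your proof coincides with the paper's.
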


\begin{proof} Assume to the contrary that $R$ is not directly-finite. Then there exists a local unit $u$ such that $uR=S_{1}\oplus T_{1}$
with $uR\cong S_{1}$. Then $S_{1}=S_{2}\oplus T_{2}$ with $S_{2}\cong
S_{1},T_{2}\cong T_{1}$. Procceding like this, we get an infinite number of
idependent (cyclic) direct summands $T_{1}\cong T_{2}\cong\cdot\cdot\cdot\cong
T_{n}\cong\cdot\cdot\cdot$ . For each $n$, choose a maximal submodule $M_{n}$
of $T_{n}$ so that $T_{i}/M_{i}\cong T_{j}/M_{j}$ for all $i,j$. Then $(%
{\displaystyle\bigoplus\limits_{n\geq1}}
T_{n})/(%
{\displaystyle\bigoplus\limits_{n\geq1}}
M_{n})\cong%
{\displaystyle\bigoplus\limits_{n\geq1}}
(T_{n}/M_{n})$ is a direct sum of isomorphic simple right $R$-modules and so
is injective. Consequently it is a direct summand of $uR/(%
{\displaystyle\bigoplus\limits_{n\geq1}}
M_{n})$ and hence is cyclic, a contradiction. This shows that $R$ must be directly-finite.
\end{proof}

As a consequence, we have the following graphical conclusion for a $\Sigma$-$V$ Leavitt path algebra. 

\begin{cor} \label{df=>NE}
If $L:=L_{K}(E)$ is a $\Sigma$-$V$ ring, then no closed path in $E$ has an exit.
\end{cor}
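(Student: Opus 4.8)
The plan is to argue by contraposition using Proposition~\ref{df}. Suppose a closed path $c$ in $E$ based at a vertex $v$ has an exit; I want to produce a violation of direct-finiteness for $L=L_K(E)$, which by Proposition~\ref{df} is impossible for a $\Sigma$-$V$ ring. The idea is that the presence of an exit makes the local unit $v$ behave like the idempotent in a non-Dedekind-finite ring: the element $c$ (or rather $c$ together with $c^\ast$) gives a one-sided but not two-sided inverse relation. Concretely, with $a=c$ and $b=c^\ast$ one has, by the CK-1 relation, $c^\ast c = r(c) = v$, but $cc^\ast \neq v$ precisely because of the exit: if $f$ is an exit at a vertex $s(e_i)$ on $c$, then $v - cc^\ast$ is a nonzero idempotent (this is essentially what was shown inside the proof of Proposition~\ref{bddIdx = > NE}, where the elements $\epsilon_n = c^n(c^\ast)^n$ were shown to give strictly decreasing direct summands). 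Thus $vL \cong cc^\ast L \oplus (v-cc^\ast)L$ with $vL \cong c^\ast L \cdot$ (the map $x \mapsto cx$) restricting appropriately — the point being $v L$ is isomorphic to the proper direct summand $cc^\ast L$ via right multiplication tricks, contradicting direct-finiteness.

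The key steps, in order: First, recall that a closed path contains a cycle, and an exit for the closed path gives an exit for that cycle (or one reduces directly to the simple-closed-path case); so without loss of generality one may take $c$ to be a cycle with an exit, or simply work with the closed path $c$ directly since only the relations $c^\ast c = v$ and $cc^\ast \neq v$ are used. Second, verify that $e := cc^\ast$ is an idempotent with $ev = ve = e$, $e \neq v$, exactly as in the computation already carried out in the proof of Proposition~\ref{bddIdx = > NE} (the exit $f$ witnesses $f^\ast c^\ast$ $\cdots$ nonzero, forcing strictness). Third, observe that the corner ring $vLv$ then contains elements $\bar a, \bar b$ with $\bar b \bar a = v$ but $\bar a \bar b = e \neq v$, so $vLv$ is not a directly-finite ring with identity $v$; equivalently $vL$ is isomorphic to its proper direct summand $eL$. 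Fourth, invoke Proposition~\ref{df}: since $L$ is a $\Sigma$-$V$ ring and has local units, it must be directly-finite, and in particular the local unit $v$ cannot have $vL$ isomorphic to a proper direct summand — contradiction. Hence no closed path in $E$ has an exit.

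The main obstacle, and the only place requiring genuine care, is the bookkeeping to show $cc^\ast \neq v$ (equivalently that $v L \cong cc^\ast L$ properly) directly from the exit, and to phrase the isomorphism $vL \cong cc^\ast L$ cleanly in the local-units setting. Fortunately this is already done in the excerpt: the proof of Proposition~\ref{bddIdx = > NE} establishes precisely that $\epsilon_0 = v$, $\epsilon_1 = cc^\ast$ satisfy $\epsilon_0 L \supsetneq \epsilon_1 L$ with $\epsilon_0 L \cong \epsilon_1 L$, the strictness following from the exit $f$ via the CK-1 simplification $f^\ast c^\ast \cdots = 0$. So the cleanest write-up simply quotes that computation: the exit yields $vL = cc^\ast L \oplus (v - cc^\ast)L$ with $cc^\ast L \cong vL$ and $(v-cc^\ast)L \neq 0$, so $vL$ is isomorphic to a proper direct summand of itself, contradicting Proposition~\ref{df}. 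One should also note at the outset that a closed path need not be a cycle, but since the argument only uses $c^\ast c = s(c_1)$-type relations and the existence of an exit, it applies verbatim to any closed path with an exit; alternatively one passes to the cycle contained in it.
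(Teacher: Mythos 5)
Your proposal is correct and is essentially the paper's own argument: both reduce to a cycle (or closed path) $c$ with exit $f$ based at $v$, use Proposition~\ref{df} to get direct-finiteness, and derive the contradiction from $c^{\ast}c=v$ together with the CK-1 computation $f^{\ast}cc^{\ast}=0\neq f^{\ast}=f^{\ast}v$. The only difference is presentational — you phrase it as ``the exit forces $cc^{\ast}\neq v$, violating direct-finiteness,'' while the paper assumes direct-finiteness to get $cc^{\ast}=v$ and then kills $f^{\ast}$ — and your extra care about passing from a closed path to a cycle is a point the paper glosses over.
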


\begin{proof}
Let $L:=L_{K}(E)$ be a $\Sigma$-$V$ ring. We wish to show that no closed path in $E$ has an exit. Suppose, on the contrary, $E$ contains a cycle $c=e_{1}%
\cdot\cdot\cdot e_{n}$ with $s(e_{1})=v=r(e_{n})$ and with an exit $f$ at $v$
(so $s(f)=v$). By the CK-1 relation in the definition of a Leavitt path
algebra, $c^{\ast}c=v$. By the above proposition $L$ is directly-finite and so we have $cc^{\ast}=v$.
Multiplying both sides of the equation by $f^{\ast}$ and using the CK-1
relation, we obtain $0=f^{\ast}cc^{\ast}=f^{\ast}v=f^{\ast}$, a contradiction.
So no cycle in $E$ has an exit.
\end{proof}

Recall that a ring $R$ is called right weakly regular if for each right ideal $I$ of $R$, $I^2=I$. Clearly, every von Neumann regular ring is both right and left weakly regular.   

\begin{lem} \label{wr}
Suppose $R$ is a ring with local units. If $R$ is a right $V$-ring, then $R$ is right weakly regular.
\end{lem}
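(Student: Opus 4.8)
The plan is to show that for every right ideal $I$ of $R$, we have $I \subseteq I^2$ (the reverse inclusion is automatic), by exploiting the hypothesis that $R$ is a right $V$-ring. First I would recall the standard characterization of right $V$-rings in terms of ideals: a ring $R$ with local units is a right $V$-ring if and only if every right ideal of $R$ is an intersection of maximal right ideals, equivalently, for every right ideal $I$ and every element $a \notin I$, there is a maximal right ideal $M$ with $I \subseteq M$ and $a \notin M$. (For rings with identity this is classical; for rings with local units one checks that the same argument goes through, using that every proper right ideal contained in $uR$ for a local unit $u$ is contained in a maximal right submodule of $uR$, since $uR$ is finitely — indeed cyclically — generated over the corner ring $uRu$ with identity $u$.)

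Next, fix a right ideal $I$ and let $a \in I$; I want to show $a \in I^2$. Pick a local unit $u$ with $ua = au = a$. Suppose toward a contradiction that $a \notin I^2$. Note that $aR \subseteq I$, hence $aR \cdot R \subseteq I^2$, so actually $I^2 \supseteq a(uR) = aR$ once we observe $au = a$... — let me instead argue via a well-chosen right ideal. Consider the right ideal $J = aR$ (or more precisely $J = a(uRu) + \text{(appropriate closure)}$); the key right ideal to look at is $J := aR + I^2$ versus $I^2$ itself. Since $a \in I$, we have $aR \subseteq I$ and $I \cdot aR \subseteq I^2$. The cleanest route: show that $a \in (aR + I^2) \cdot$ something, or more directly, apply the $V$-ring characterization to the right ideal $I^2$. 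If $a \notin I^2$, then by the characterization there is a maximal right ideal $M$ of $R$ with $I^2 \subseteq M$ but $a \notin M$. Then $R/M$ is a simple right $R$-module on which $a$ acts nontrivially, i.e. $(R/M) a \neq 0$; but $I \cdot a \subseteq I^2 \subseteq M$, and since also $a R \subseteq I$ we get, for the simple module $S = R/M$, that $S a$ is a nonzero submodule hence $S a = S$, so $S = Sa \subseteq S I$, and then $S = S I = (SI) I \subseteq$ ... I would push this to derive $Sa = Sa \cdot (\text{stuff in } I) \subseteq S I^2 \subseteq SM = 0$, contradicting $Sa \neq 0$. Concretely: from $a R \subseteq I$ we get $S a \subseteq S$ and $(Sa) R \subseteq S a$; since $S$ is simple and $Sa \neq 0$, $Sa = S$. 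Then $S = Sa$, and $a \in I$ gives $Sa \subseteq SI$, so $SI = S$; applying this twice, $S = SI = (SI)I = SI^2 \subseteq SM = 0$, a contradiction.

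The main obstacle I anticipate is purely bookkeeping at the level of rings with local units rather than rings with identity: one must make sure the $V$-ring characterization (right ideals are intersections of maximal right ideals, and the separation property for elements outside a right ideal) is correctly formulated and valid in the local-units setting, which requires knowing that a proper right $R$-submodule of $uR$ sits inside a maximal one — this uses the cyclicity of $uR$ over $uRu$ and Zorn's lemma, and one should take care that the maximal right submodule of $uR$ extends to (is the trace on $uR$ of) a genuine maximal right ideal of $R$, or else argue throughout with $uRu$-modules and transfer back. Once that foundational point is nailed down, the argument above is short. An alternative, perhaps even cleaner, is to avoid the separation characterization entirely: take any right ideal $I$, suppose $I^2 \subsetneq I$, pick $a \in I \setminus I^2$, choose a maximal right ideal $M \supseteq I^2$ with $a \notin M$ directly from the fact that $R/I^2$ contains a nonzero cyclic module in which the image of $a$ is nonzero and hence (by the $V$-ring property applied after noting $R/I^2$'s simple quotients are injective $R$-modules) is detected by some simple quotient, and then run the same $S = Sa = SI = SI^2 = 0$ computation. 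I would present whichever version keeps the local-units technicalities most contained.
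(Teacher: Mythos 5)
Your first half---reducing the lemma to the fact that in a right $V$-ring with local units every right ideal is an intersection of maximal right ideals, and then separating $a\in I\setminus I^{2}$ from $I^{2}$ by a maximal right ideal $M$---is exactly the paper's route, including the caveat about transferring Villamayor's characterization to the local-unit setting. The gap is in your closing computation with $S=R/M$. You assert $(Sa)R\subseteq Sa$ and, more seriously, $SM=0$. Neither holds in general: $Sa$ is not a submodule, since $(sa)r=s(ar)$ need not lie in $Sa$; and $SM=(RM+M)/M$ vanishes only when $M$ is a two-sided ideal, whereas $M$ is merely a maximal \emph{right} ideal---the annihilator of the simple module $R/M$ is the largest two-sided ideal contained in $M$, not $M$ itself. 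So the chain $S=SI=SI^{2}\subseteq SM=0$ breaks at the last equality and delivers no contradiction.

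The argument is salvageable, and the fix is essentially what the paper does. Work with the cyclic generator $\bar{u}$ of $S$, where $u$ is a local unit with $ua=au=a$: the annihilator of $\bar{u}$ is exactly $M$, so $\bar{u}M=0$ even though $SM\neq 0$. From $\bar{a}=\bar{u}a\neq 0$ and simplicity you get $S=\bar{a}R$, hence $\bar{u}=\overline{ax}$ for some $x\in R$, and then $\bar{a}=\bar{u}a=\overline{axa}=0$ because $ax\in I$, $a\in I$ give $axa\in I^{2}\subseteq M$---a contradiction. Stripped of the module language this is precisely the paper's element-level computation: $R=aR+M$, write $u=ax+m$ with $m\in M$, and conclude $a=ua=axa+ma\in I^{2}+M=M$, contradicting $a\notin M$. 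In short: same strategy, but your final step as written uses a false identity and needs to be rerouted through the generator $\bar{u}$ (or through elements, as in the paper).
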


\begin{proof} Villamayor proved that in a right $V$-ring with identity, every right ideal is an intersection of maximal
right ideals. It may be checked that this result holds for a $V$-ring
$R$ with local units too. Now, let $I$ be a
non-zero right ideal of $R$. If $I\neq I^{2}$, let $a\in I\backslash I^{2}$.
Since $I^{2}$ is an intersection of maximal right ideals, there is a maximal
right ideal $M$ containing $I^{2}$ such that $a\notin M$. Then $R=aR+M$. Let
$u$ be a local unit satisfying $au=ua=a$. Write $u=ax+m$ where $x\in R$ and
$m\in M$. Then $a=ua=axa+ma\in I^{2}+M=M$, a contradiction. Hence $I^{2}=I$
for every right ideal $I$ of $R$. This shows that $R$ is right weakly regular.
\end{proof}

\begin{lem} \label{k} $($Theorem 3.1, \cite{ARS}$)$ Let $L:=L_{K}(E)$ be a Leavitt path algebra of an arbitrary graph $E$. If $L$ is right weakly regular, then the graph $E$ satisfies the Condition (K).
\end{lem}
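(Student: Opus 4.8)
The plan is to prove the contrapositive: assuming $L:=L_{K}(E)$ is right weakly regular, we deduce that $E$ satisfies Condition (K). The ring-theoretic input consists of two elementary hereditary properties of right weak regularity (the condition that every right ideal be idempotent), both valid for rings with local units, which I would verify first. (1) It passes to quotients: if $J$ is an ideal of a right weakly regular ring $R$, then a right ideal of $R/J$ has the form $I/J$ with $J\subseteq I$ a right ideal of $R$, and $(I/J)^{2}=(I^{2}+J)/J=(I+J)/J=I/J$ since $I^{2}=I$. (2) It passes to corner rings: if $e=e^{2}\in R$ and $a\in eRe$, then idempotency of the right ideal $aR$ gives $a\in aRaR$, and inserting copies of $e$ (using $ea=ae=a$ and associativity) rewrites this as $a\in a(eRe)a(eRe)$; hence every right ideal of $eRe$ is idempotent.

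Next I would invoke the standard structural fact about Condition (K) (see \cite{T}, \cite{AArS}): a graph satisfies Condition (K) if and only if every quotient graph $E\setminus(H,S)$ satisfies Condition (L). So if $E$ fails Condition (K), there is an admissible pair $(H,S)$ for which the quotient graph $F:=E\setminus(H,S)$ contains a cycle $c$ with no exit; fix a vertex $w$ on $c$ (necessarily an ordinary vertex of $F$, since the vertices $v'$ adjoined for breaking vertices emit no edges and so cannot lie on a cycle). Because $L/I(H,S)\cong L_{K}(F)$ and, by (1), $L_{K}(F)$ is again right weakly regular, it suffices to produce a right ideal of $L_{K}(F)$ that is not idempotent.

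For this I would compute the corner ring $wL_{K}(F)w$. Since $c$ has no exit, at each vertex $v_{i}$ of $c$ the only emitted edge is the edge $e_{i}$ of $c$, so the CK-2 relation there reads $v_{i}=e_{i}e_{i}^{\ast}$; iterating this along $c$ gives $\gamma\gamma^{\ast}=s(\gamma)$ for every path $\gamma$ obtained by traversing $c$, and in particular $cc^{\ast}=c^{\ast}c=w$. Consequently the only paths in $F$ starting at $w$ are segments of powers of $c$, the only closed paths at $w$ are the $c^{k}$, and $wL_{K}(F)w$ is the $K$-span of $\{c^{k}(c^{\ast})^{\ell}:k,\ell\geq 0\}$; sending $c\mapsto x$ identifies it with $K[x,x^{-1}]$. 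But $K[x,x^{-1}]$ is commutative and not von Neumann regular (it is a domain that is not a field), hence not weakly regular; concretely, the ideal $(x-1)$ is not idempotent, since $(x-1)^{2}\subsetneq(x-1)$ (the quotient modulo $(x-1)^{2}$ has a nonzero nilpotent element). By (2), this forces $L_{K}(F)$ to fail right weak regularity, contradicting the previous paragraph. Hence $E$ satisfies Condition (K).

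The part I expect to require the most care is the structural reduction of the second paragraph: extracting, from the mere failure of Condition (K), an honest exit-free cycle inside some quotient graph of $E$, and correctly bookkeeping the breaking vertices, both to identify $L/I(H,S)$ with $L_{K}(F)$ and to be sure the base vertex of the cycle is a genuine vertex so that the corner-ring computation of the third paragraph applies. Granting that reduction, the remaining steps are routine; the only other point worth rechecking is that passage to quotients and to corner rings genuinely preserves right weak regularity for rings with local units, which the computations in the first paragraph settle.
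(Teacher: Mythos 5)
Your argument is correct, but note that the paper does not prove this lemma at all: it is quoted verbatim as Theorem~3.1 of the cited reference [ARS] (Aranda Pino--Rangaswamy--Siles Molina), so any self-contained proof is ``different from the paper's.'' Your route is the natural one and all the steps check out: right weak regularity does pass to quotients and to corners $eRe$ exactly as you compute (for the corner step one uses $a\in aR$ via $a=ae$, so the right ideal generated by $a$ is $aR$ and idempotency gives $a\in aRaR$, whence $a\in a(eRe)a(eRe)$ after inserting $e$'s); the failure of Condition~(K) does produce a cycle $c$ without exit in some quotient graph $E\setminus(H,B_H)$ (take $v$ the base of a unique closed simple path and $H=\{u: u\not\geq v\}$, which is hereditary and saturated); and since each vertex of $c$ then emits exactly one edge, the CK-2 relations give $cc^{\ast}=c^{\ast}c=w$, so the corner $wL_K(F)w$ is spanned by $\{c^k,(c^{\ast})^{\ell}\}$ and maps onto $K[x,x^{-1}]$, with injectivity coming for free from the $\mathbb{Z}$-grading together with $(c^{\ast})^kc^k=w\neq 0$ --- a point worth making explicit, since it is the only place where ``spanned by'' must be upgraded to ``isomorphic to'' before you can conclude that the non-idempotent ideal $(x-1)$ of $K[x,x^{-1}]$ pulls back to a non-idempotent right ideal. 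One cosmetic remark: your opening sentence announces ``the contrapositive'' but states the lemma itself; what you actually prove is the contrapositive (failure of (K) implies failure of right weak regularity), which is what the argument needs.
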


As a consequence, we have

\begin{prop} \label{vnr}
If $L:=L_{K}(E)$ is a $\Sigma$-$V$ ring, then $E$ is acyclic and consequently, $L$ is von Neumann regular.
\end{prop}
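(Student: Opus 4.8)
The plan is to combine the preparatory results just proved. First, since a $\Sigma$-$V$ ring is in particular a $V$-ring and $L=L_{K}(E)$ is a ring with local units, Lemma~\ref{wr} shows that $L$ is right weakly regular, whence Lemma~\ref{k} yields that the graph $E$ satisfies Condition~(K). On the other hand, Corollary~\ref{df=>NE} tells us that no closed path of $E$---in particular, no cycle---has an exit. So the whole statement reduces to the purely combinatorial assertion that a graph satisfying Condition~(K) in which no cycle has an exit must be acyclic.

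To establish this I would argue by contradiction. Suppose $E$ contains a cycle $c=e_{1}\cdots e_{n}$ based at $v=s(e_{1})$. By Corollary~\ref{df=>NE} this cycle has no exit, so by the very definition of an exit each vertex $s(e_{i})$ emits exactly one edge, namely $e_{i}$. Consequently any closed path based at $v$ is forced, edge by edge, to run along $e_{1},e_{2},\dots,e_{n}$, return to $v$, and then repeat; hence every closed path based at $v$ is a power $c^{k}$, and the unique \emph{simple} closed path based at $v$ is $c$ itself. This contradicts Condition~(K), which requires the vertex $v$ (lying on the closed path $c$) to be the base of a second, distinct simple closed path; equivalently, Condition~(K) forces every cycle to have an exit, which is impossible here. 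Therefore $E$ has no cycles, i.e.\ $E$ is acyclic.

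Once $E$ is known to be acyclic, the von Neumann regularity of $L$ is immediate from the well-known characterization that a Leavitt path algebra $L_{K}(E)$ is von Neumann regular if and only if the graph $E$ contains no cycles. The step I expect to need the most care is the precise reading of Condition~(K): it must be taken in its standard formulation in terms of \emph{simple} closed paths, so that a power $c^{k}$ of a cycle does not count as a genuinely new closed path at $v$; otherwise the contradiction in the second paragraph evaporates. Verifying that $L$ has local units and that Lemmas~\ref{wr} and~\ref{k} apply in this nonunital setting, as well as invoking the acyclic/regular dichotomy, is routine.
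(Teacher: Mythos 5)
Your proposal is correct and follows essentially the same route as the paper: the paper likewise combines Corollary~\ref{df=>NE} (no closed path has an exit) with Lemma~\ref{wr} and Lemma~\ref{k} (Condition~(K), which forces every cycle to have an exit) to get a contradiction unless $E$ is acyclic, and then cites the acyclic $\Leftrightarrow$ von Neumann regular characterization. Your extra care in reading Condition~(K) via \emph{simple} closed paths just fills in a step the paper asserts in one line.
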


\begin{proof}
Let $L$ be a right $\Sigma$-$V$ ring. We first show that the graph $E$ is acylic. By Corollary \ref{df}, no closed
path in the graph $E$ has an exit. On the other hand, since $L$ is also a
right $V$-ring, Lemma \ref{wr} implies that $L$ is right weakly regular. Now,
in view of the Lemma \ref{k}, we know that the graph $E$ satisfies the Condition
(K) which in particular implies that every cycle in $E$ has an exit. These
contradicting statements imply that the graph $E$ contains no cycles and hence $L$ is von Neumann regular by \cite[Theorem 1]{AR}.
\end{proof}

\begin{remark}
Note that, in general, a $\Sigma$-$V$ ring need not be von Neumann regular. The famous example of Cozzens \cite{Cozzens} is a $\Sigma$-$V$ ring (being a noetherian $V$-domain), but it is not von Neumann regular as it is not artinian.
\end{remark}

\noindent Before proving our next proposition, we note that the Leavitt path algebra $L_K(E)$ of any graph $E$ is non-singular (see \cite{AArS}) and so the maximal right quotient ring $Q^r_{max}(L_K(E))$ of $L_K(E)$ is a von Neumann regular right self-injective ring. It is not difficult to see that any semiprime right self-injective ring possesses an identity element (see for example, page 1085, \cite{FU}). So, although $L_K(E)$ is a ring possibly without identity when $E$ is an infinite graph but its maximal quotient ring $Q^r_{max}(L_K(E))$ is a ring with identity and therefore we can use the type theory of von Neumann regular right self-injective rings for ring $Q^r_{max}(L_K(E))$. 

The theory of types was first proposed by Murray and von Neumann \cite{MV} but it was developed as a classification tool later by Kaplansky in \cite{Kap1} for a certain class of rings of operators which are called Baer rings. Since von Neumann regular right self-injective rings are Baer rings, Kaplansky's theory is applicable to them. We recall basics of the structure theory of von Neumann regular right self-injective rings. A von Neumann regular right self-injective ring is said to be of type $I$ provided it contains a faithful abelian idempotent and it is said to be of type $II$ provided $R$ contains a faithful directly-finite idempotent but no nonzero abelian idempotents. A von Neumann regular right self-injective ring is of type $III$ if it contains no nonzero directly-finite idempotents. Moreover, a von Neumann regular right self-injective ring is said to be of type $I_{f}$ (resp., $I_{\infty}$) if $R$ is of type $I$ and is directly-finite (resp., purely-infinite) and it is said to be of type $II_{f}$ (resp., $II_{\infty}$) if $R$ is of type $II$ and is directly-finite (resp., purely-infinite). It is well known that any von Neumann regular right self-injective ring $R$ can be decomposed as a product $R=R_1 \times R_2 \times R_3 \times R_4 \times R_5$ where $R_1$ is of type $I_f$, $R_2$ is of type $I_\infty$, $R_3$ is of type $II_f$, $R_4$ is of type $II_\infty$, and $R_5$ is of type $III$ (see \cite{Goodearl}, pp. 111-115).

In \cite[Lemma 2(b)]{AS} it is shown a right non-singular right $\Sigma$-$V$ ring with identity has bounded index of nilpotence. However, there are some flaws in the argument there. So, we correct those mistakes in the first part of the proof below.    
 
\begin{prop} \label{bdd idx} 
If $L:=L_{K}(E)$ is a $\Sigma$-$V$ ring, then $L$ has bounded index of nilpotence.
\end{prop}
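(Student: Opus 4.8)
The plan is to feed the structure theory of von Neumann regular right self-injective rings, applied to $Q := Q^r_{max}(L)$, into the hypothesis that $L$ is a $\Sigma$-$V$ ring, and extract a numerical bound. By Proposition~\ref{vnr} we already know $L$ is von Neumann regular (and, being a Leavitt path algebra, right nonsingular), so $Q$ is von Neumann regular, right self-injective, and contains $L$ as an essential right ideal; moreover $L$ and $Q$ have the same maximal right ideals up to the correspondence $M \mapsto M \cap L$, so every simple right $L$-module is (the restriction of) a simple right $Q$-module and vice versa. The first step is therefore to observe that $Q$ is itself a right $\Sigma$-$V$ ring: a direct sum $S^{(\alpha)}$ of copies of a simple right $Q$-module is injective over $L$ by hypothesis, hence injective over $Q$ (an $L$-injective module that is a $Q$-module is $Q$-injective when $L \leq_{ess} Q$, via the standard argument that $Q$-injectivity can be tested along $L$). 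So it suffices to prove the statement for $Q$, i.e. to show a von Neumann regular right self-injective right $\Sigma$-$V$ ring with identity has bounded index of nilpotence.

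Next I would invoke the type decomposition $Q = Q_1 \times \cdots \times Q_5$ recalled in the excerpt, and rule out the ``infinite'' and type $III$ pieces. Since $\Sigma$-$V$ rings are directly-finite (Proposition~\ref{df}), and direct-finiteness passes to direct factors, the factors $Q_2$ ($I_\infty$), $Q_4$ ($II_\infty$) and $Q_5$ ($III$) must all be zero; so $Q = Q_1 \times Q_3$ is directly finite, i.e. of type $I_f \times II_f$. The key claim is then that the type $II_f$ part $Q_3$ must also vanish. Here is where I expect the main obstacle: one must produce, inside any nonzero type $II_f$ von Neumann regular self-injective ring, a simple module $S$ together with an infinite direct sum of copies of $S$ that fails to be injective — equivalently, exploit the halving property of type $II$ (every idempotent splits into two equivalent idempotents whose sum it is) to build, as in the proof of Proposition~\ref{df}, an infinite chain of isomorphic orthogonal idempotents $e_1, e_2, \dots$ whose cyclic modules $e_i Q$ all surject onto a common simple module $S_i \cong S$; then $\bigoplus_i (e_i Q / M_i) \cong S^{(\omega)}$ would be injective, hence a direct summand of $\bigoplus e_i Q$ (inside $Q$), forcing it to be finitely generated — a contradiction. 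The care needed is that in type $II_f$ one cannot find a homogeneous decomposition of $1$ into infinitely many pieces (that is exactly what being directly finite blocks at the level of $1$), so one works instead below a fixed idempotent $e$ using repeated halving: $e = f_1 + f_1'$ with $f_1 \sim f_1'$, then halve $f_1 = f_2 + f_2'$, and so on, yielding orthogonal $f_1', f_2', f_3', \dots \le e$ that are pairwise isomorphic (all $\sim f_2'$, say, after one more halving to fix the type), giving the desired infinite independent family of isomorphic cyclic right ideals. This is the step I would write out most carefully; it is essentially the type $II$ analogue of the argument already used for Proposition~\ref{df}, and it is where the earlier published proof in \cite{AS} had its flaw.

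Finally, with $Q = Q_1$ of type $I_f$, the structure theory gives $Q \cong \prod_{n \ge 1} M_n(R_n)$ for suitable von Neumann regular self-injective rings $R_n$ (the type $I_f$ decomposition by the index $n$ of the abelian idempotents), and it remains to bound the index, i.e. to show $R_n = 0$ for all large $n$. Again this follows from the $\Sigma$-$V$ property: if $R_n \ne 0$ for arbitrarily large $n$, choose distinct $n_1 < n_2 < \cdots$ with $R_{n_k} \ne 0$; inside each $M_{n_k}(R_k)$ one finds a column, giving a cyclic right ideal $A_k$ with $n_k$ pairwise isomorphic independent cyclic summands of ``smaller'' type, and splicing these across $k$ produces an infinite independent family of isomorphic simple-topped cyclic right ideals exactly as before, so that an infinite direct sum of isomorphic simples is injective, hence a summand of a projective, hence cyclic — contradiction. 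Concretely, I would phrase the whole argument through the single lemma extracted from Proposition~\ref{df}: \emph{in a right $\Sigma$-$V$ ring, one cannot have an infinite family of pairwise isomorphic nonzero orthogonal-idempotent-generated right ideals all mapping onto a common simple module}; both the ``kill type $II_f$'' step and the ``bound the type $I_f$ index'' step are then applications of this lemma, and the only real work is arranging such a family in each case. Once $Q$ has bounded index of nilpotence $\le d$, so does its subring $L$, since a nilpotent element of $L$ is nilpotent in $Q$; this completes the proof.
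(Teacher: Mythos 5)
There is a genuine gap at the very first step, and it propagates through the rest of the argument. You reduce to $Q:=Q^{r}_{max}(L)$ by asserting that $Q$ is itself a right $\Sigma$-$V$ ring, on the grounds that maximal right ideals of $Q$ correspond to maximal right ideals of $L$ via $M\mapsto M\cap L$, so that every simple right $Q$-module is a simple right $L$-module to which the hypothesis applies. This correspondence is false in general: $L$ is typically a non-unital proper subring of $Q$, and $Q$ has maximal right ideals containing $L$ (already for $L=\bigoplus_{n}M_{n}(K)$ with $Q=\prod_{n}M_{n}(K)$, and likewise for the ``infinite clock'' algebra of Example \ref{Ex-1}, which \emph{is} a $\Sigma$-$V$ ring). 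The corresponding simple $Q$-modules are annihilated by $L$, hence are not simple (or even non-degenerate) $L$-modules, and the $\Sigma$-$V$ hypothesis on $L$ says nothing about them. The same gap undermines your elimination of the types $I_{\infty}$, $II_{\infty}$ and $III$: you obtain direct finiteness of $Q$ by applying Proposition \ref{df} \emph{to $Q$}, which again presupposes that $Q$ is $\Sigma$-$V$; and direct finiteness does not ascend from $L$ to $Q^{r}_{max}(L)$ in general (the free algebra $K\langle x,y\rangle$ is a directly finite nonsingular domain whose maximal right quotient ring is a regular self-injective ring that is not directly finite). So, as written, neither the direct finiteness of $Q$ nor the subsequent type analysis is established.

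The repair --- and this is exactly what the paper does --- is to keep every application of the $\Sigma$-$V$ hypothesis inside $L$. Given an infinite family of matrix units $e_{ij}$ in $Q$ witnessing a failure of direct finiteness, one uses the essentiality of $L_{L}$ in $Q_{L}$ to intersect the right ideals $e_{ij}Q$ with $L$ and manufacture, for each $n$, an independent family of $n$ pairwise isomorphic cyclic submodules $C_{n,1},\dots,C_{n,n}$ of $L_{L}$, the whole doubly indexed family being independent; choosing maximal submodules with a common simple quotient then yields an infinite direct sum of isomorphic simple $L$-modules which is injective by hypothesis yet cannot be a direct summand of the cyclic module over which it sits --- a contradiction carried out entirely with $L$-modules. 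Your ``single lemma'' (no infinite independent family of isomorphic idempotent-generated right ideals all mapping onto a common simple module) is the right engine, but it must be fed honest simple $L$-modules, and the descent from idempotent configurations in $Q$ to cyclic submodules of $L$ is precisely the technical content you skip. The remainder of your outline (type decomposition of the now directly finite $Q$, killing the $II_{f}$ part, bounding the index of the $I_{f}$ part, and passing bounded index from $Q$ down to the subring $L$) follows the same route as the paper and its source \cite{AS}, and your identification of the $II_{f}$ step as the delicate one is accurate.
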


\begin{proof}
Since $L$ is non-singular, the maximal right quotient ring $Q$ of $L$ is a von Neumann regular right self-injective ring with identity. Now we claim that $Q$ is directly-finite. Assume to the contrary that $Q$ is not directly-finite. Then there exists an infinite family of orthogonal idempotents $\{e_{ij}: (i,j)\in \mathbb N\times \mathbb N\}$ in $Q$. For each $n\in\mathbb N$, we may produce by induction an idependent family of cyclic submodules $C_{n,i}$ of $L_L$, where $C_{n,i}\cong C_{n,j}$ for all $i,j=2,3, \ldots,n$ as done in \cite{AS}. For the sake of completeness, we give a sketch of the construction. Let $n>1$. Since $L\subseteq _{e}Q$, there exists a nonzero cyclic submodule $C_{n,1}$ of $L_L$ such that $C_{n,1}\subseteq e_{n^{2},n^{2}}Q\cap L$. Now we choose $0\neq x_{2}\in e_{n^{2}+1,n^{2}}C_{n,1}\cap L$. Then $x_{2}=e_{n^{2}+1,n^{2}}x_{1}, $ where $x_{1}\in C_{n,1}$. Denote $C_{n,2}=x_{2}L$ and redefine $C_{n,1}$ by setting $C_{n,1}=x_{1}L$. Define the module homomorphism $\varphi:C_{n,1}\longrightarrow C_{n,2}$ by $\varphi (x)=e_{n^{2}+1,n^{2}}x$. Clearly, $\varphi$ is an isomorphism (with inverse given by left multiplication by $e_{n^2, n^2+1}$), and so $C_{n,1}\cong C_{n,2}$. Suppose now that we have defined cyclic submodules $C_{n,1}\cong C_{n,2}\cong ...\cong C_{n,j-1}$ in $L_L$, where $C_{n,i}=x_{i}L$, $i=1,2,\ldots, j-1$. Next, we choose $x_{j}$ such that $x_{j}$ $\in e_{n^{2}+j-1,n^{2}+j-2}C_{n,j-1}\cap L$ and write $x_{j}=e_{n^{2}+j-1,n^{2}+j-2}x_{j-1}r_{j-1}$ where $r_{j-1}\in L$. Let $x_{j-1}^{^{\prime }}=x_{j-1}r_{j-1}$, and set $C_{n,j}=x_{j}L$. Now redefine $C_{n,j-1}=x_{j-1}^{^{\prime }}L$ (which is contained in the previously constructed $C_{n,j-1}$). Then $C_{n,j-1}\cong $ $C_{n,j}$ under the isomorphism that sends $x\in C_{n,j-1}$ to $e_{n^{2}+j-1,n^{2}+j-2}x$. We redefine preceding $C_{n,1},$ $C_{n,2},\ldots, C_{n,j-2}$ accordingly so that they all remain isomorphic to each other and to $C_{n,j-1}$. Note that the family $\{C_{n,i}$: $n=2,3,\ldots,$ \ $i=1,2,\ldots, n\}$ is independent since $\{e_{ij}Q: i, j\in \mathbb N\times \mathbb N\}$ is independent. By our construction, $C_{n,i}\cong C_{n,j}$ for all $n=2,3, \ldots$ and $1\leq i,j\leq n $. 

Now we choose maximal submodules $M_{n,i}$ of $C_{n,i},$ $n=2,3, \ldots$ and $1\leq i\leq n$, such that $C_{n,i}/M_{n,i}\cong C_{n,j}/M_{n,j}$ for all $n,i,j$. Set $M=\oplus _{n,i}M_{n,i}$. Then $(\bigoplus C_{n,i})/(\bigoplus M_{n,i}) \cong \bigoplus (C_{n,i}/M_{n,i})$ is a direct sum of isomorphic simple $L$-modules and so it is injective. Therefore, it is a direct summand of $L/M$, a contradiction. Therefore $Q$ must be directly finite. Now, we may invoke the type theory of von Neumann regular right self-injective rings and write $Q=Q_1\times Q_2$, where $Q_1$ is of type $I_f$ and $Q_2$ is of type $II_f$. As shown in \cite[Lemma 2(b)]{AS}, it turns out that $Q_2=0$ and so $Q$ is of type $I_f$ and consequently, $Q$ is a finite direct product of matrix rings over 
abelian regular self-injective rings. This shows $Q$ has bounded index of nilpotence and hence, $L$ has bounded index of nilpotence. 
\end{proof}

\begin{remark} \rm
Kaplansky asked if a prime von Neumann regular is primitive \cite{Kap}. This was answered in the negative by Domanov \cite{Domanov} who constructed a non-primitive prime von Neumann regular group algebra. Recently, in \cite{ABR} an example of a prime non-primitive von Neumann regular Leavitt path algebra has been constructed. Inspired by Kaplansky, Fisher raised the question whether a prime right $V$-ring is right primitive \cite{Fisher}. This question is still open. In view of the above proposition it can be seen that a prime $\Sigma$-$V$ Leavitt path algebra is always primitive. Because if a Leavitt path algebra $L$ is a prime $\Sigma$-$V$ ring, then its maximal right quotient ring $Q$ is a prime von Neumann regular ring with bounded index of nilpotence and hence $Q$ is simple artinian. This shows $L$ is right/left primitive. 
\end{remark}

The following lemma is implicit in \cite{R} and since it is useful in the
proof of Theorem \ref{main Th}, we state and give its simple proof.

\begin{lem} \label{ml}
Suppose $E$ is an acyclic graph. Then every
graded prime ideal $P$ of $L=L_{K}(E)$ with $P\cap E^{0}=H$ is of the form $P=I(H,B_{H})$ with $E^{0}\backslash
H$, a maximal tail.
\end{lem}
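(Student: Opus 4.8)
The plan is to prove the lemma in two directions: first show that a graded prime ideal $P$ of $L=L_K(E)$ with $E$ acyclic necessarily has the form $P = I(H, B_H)$ (i.e., $S = B_H$, the full set of breaking vertices), and second show that $M := E^0\setminus H$ is a maximal tail. I would begin by recalling the general facts cited earlier in the excerpt: every graded ideal of $L$ has the form $I(H,S)$ for an admissible pair $(H,S)$ with $H$ hereditary saturated and $S\subseteq B_H$; and if $P$ is a prime ideal of $L$ with $P\cap E^0 = H$, then $E^0\setminus H$ is downward directed. Since $P$ here is assumed graded, we immediately have $P = I(H,S)$ for some $S\subseteq B_H$.

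\textbf{Showing $S = B_H$.} The key observation is that $L/P \cong L_K(E\setminus(H,S))$, and $P$ is prime iff this quotient is a prime ring, iff (by the earlier cited result) $(E\setminus(H,S))^0$ is downward directed. Recall that $(E\setminus(H,S))^0 = (E^0\setminus H)\cup\{v' : v\in B_H\setminus S\}$. I would argue that if some breaking vertex $w\in B_H$ fails to lie in $S$, then the new vertex $w'$ is a sink in the quotient graph $E\setminus(H,S)$: the only edges out of $w$ that survive in the quotient are the finitely many $e$ with $r(e)\notin H$, and each such edge gets replaced by $e'$ with $r(e') = r(e)'$ — wait, that is not quite a sink. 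Let me reconsider: in fact $w'$ emits no edges at all in the quotient graph construction, since edges $e^{\prime}$ are attached with $s(e')=s(e)$, so $w'$ is genuinely a source-only vertex receiving edges $e'$ where $r(e)\in B_H\setminus S$. So $w'$ is a \emph{sink} in $E\setminus(H,S)$. Then downward directedness forces any such sink to be the unique minimal vertex; but $E$ acyclic with, say, two distinct breaking vertices outside $S$ would give two such sinks $w_1', w_2'$ with no common lower bound, contradicting downward directedness — unless $B_H\setminus S$ has at most one element. The cleaner route: if $w\in B_H\setminus S$, pick any $u\in M = E^0\setminus H$; since $M$ is downward directed (from primeness of $L/P$ together with $u$ and... ) hmm, $w'\notin M$. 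Instead I would directly exploit that $w'$ is a sink together with the fact that $w\in M$ emits at least one edge into $M$ (since $w$ is an infinite emitter, it emits infinitely many, and being a breaking vertex it emits $0<|s^{-1}(w)\cap r^{-1}(E^0\setminus H)|<\infty$ into $E^0\setminus H$); then $w$ and $w'$ have no common lower bound in the quotient graph vertex set, contradicting downward directedness. Hence $B_H\setminus S=\varnothing$, i.e., $S = B_H$ and $P = I(H,B_H)$.

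\textbf{Showing $M = E^0\setminus H$ is a maximal tail.} I need to verify the three defining properties. Property (2), that $v\geq u$ and $u\in M$ imply $v\in M$: this is exactly the statement that $H$ is hereditary (if $v\in H$ and $v\geq u$ then $u\in H$), taken contrapositively. Property (1), downward directedness of $M$: for $u,v\in M$, apply downward directedness of $(E\setminus(H,B_H))^0$ — which equals $E^0\setminus H = M$ exactly because $S = B_H$ kills all the extra primed vertices — to get $w\in M$ with $u\geq w$, $v\geq w$ in the quotient graph; since $E$ is acyclic these paths lift to paths in $E$, giving $u\geq w$, $v\geq w$ in $E$. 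Property (3), that every $u\in M$ emitting edges has an edge $e$ with $s(e)=u$, $r(e)\in M$: suppose not, so $u$ emits edges but all of them land in $H$. If $u$ is a regular vertex, then $r(s^{-1}(u))\subseteq H$, and saturation of $H$ forces $u\in H$, contradicting $u\in M$. If $u$ is an infinite emitter with $r(s^{-1}(u))\subseteq H$, then $u$ becomes a sink in $E\setminus(H,B_H)$... but actually if $u$ is an infinite emitter with all edges into $H$, then $u\notin B_H$ (since $|s^{-1}(u)\cap r^{-1}(E^0\setminus H)| = 0$, not positive), and one checks via the structure of $I(H,B_H)$ that $u\in H$ — I would justify this by noting $u\in H$ follows because the saturation/closure conditions defining hereditary saturated sets extend appropriately, or alternatively that such a $u$ would be a sink in the downward-directed quotient graph that is \emph{strictly below nothing}, forcing it to be the unique sink, and then deriving a contradiction with acyclicity and the existence of other vertices, or simply that the hereditary saturated closure absorbs it. This is the step I expect to require the most care.

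\textbf{The main obstacle} I anticipate is the careful bookkeeping in property (3) for the infinite-emitter (breaking-vertex) case — making precise why a vertex emitting edges only into $H$ must already lie in $H$ — and, relatedly, pinning down exactly why $S$ must be all of $B_H$ rather than just handling the row-finite case. Both hinge on a clean understanding of the quotient graph $E\setminus(H,S)$ and which of its vertices are sinks, combined with the downward-directedness consequence of primeness; I would lean on Proposition-level structural facts about $L_K(E\setminus(H,S))$ and the description of $B_H$ rather than recomputing from the CK-relations. Once these are in hand, the equivalence is a short synthesis: graded prime $\Leftrightarrow$ quotient graph vertex set downward directed $\Leftrightarrow$ (given acyclicity and $S=B_H$) $E^0\setminus H$ is a maximal tail.
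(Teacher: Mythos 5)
Your overall strategy is genuinely different from the paper's. The paper's entire proof consists of quoting \cite[Theorem 3.2]{R}, which is taken to assert already that a graded prime ideal is $I(H,B_{H})$ or $I(H,B_{H}\backslash\{u\})$ \emph{with $E^{0}\backslash H$ a maximal tail}, and then ruling out the second alternative: a $u\in B_{H}\backslash S$ must emit an edge back into the maximal tail, which together with $v\geq u$ for all $v$ produces a closed path, impossible in an acyclic graph. Your elimination of $B_{H}\backslash S\neq\varnothing$ is a sound, self-contained substitute for that step: a primed vertex $w'$ with $w\in B_{H}\backslash S$ really is a sink of $E\backslash(H,S)$, the vertex set of the quotient graph is downward directed because $L/P\cong L_{K}(E\backslash(H,S))$ is a prime ring (note this uses the primeness criterion for $L_{K}$ of an arbitrary graph from \cite{R}, slightly more than the fact quoted in the Preliminaries, which concerns only $E^{0}\backslash H$), and $w\geq w'$ would force a closed path at $w$ in $E$. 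Your verifications of properties (1) and (2) of a maximal tail are also correct.

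The genuine gap is exactly where you suspected it: property (3) for an infinite emitter $u\in E^{0}\backslash H$ all of whose edges have range in $H$. None of the justifications you float (``the saturation/closure conditions extend appropriately,'' ``the hereditary saturated closure absorbs it'') can work, because saturation constrains only \emph{regular} vertices, and such a $u$ need not lie in $H$. Concretely, in the infinite clock of Example \ref{Ex-1}, $H=\{w_{1},w_{2},\dots\}$ is hereditary and saturated, $B_{H}=\varnothing$, and $I(H,B_{H})$ is a graded prime ideal (the quotient is $K$), yet $E^{0}\backslash H=\{v\}$ violates condition (3) since every edge out of $v$ lands in $H$. So this step cannot be repaired along the lines you sketch; the paper obtains the maximal-tail conclusion only by importing it wholesale from \cite[Theorem 3.2]{R}, and the example above indicates that the conclusion of the lemma deserves scrutiny at precisely this point (the robust statement is that $E^{0}\backslash H$ is downward directed, which is all your method delivers and all that is needed where the lemma is applied under the additional hypotheses of Theorem \ref{main Th}). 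A self-contained proof must therefore either weaken the conclusion to ``$P=I(H,B_{H})$ with $E^{0}\backslash H$ downward directed'' or add a hypothesis excluding infinite emitters that feed entirely into $H$.
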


\begin{proof}
By \cite[Theorem 3.2]{R}, $P$ is of the form $P=I(H,B_{H})$
or $P=I(H,B_{H}\backslash\{u\})$ where $E^{0}\backslash H$ is a maximal tail
and, further, in the second case, $v\geq u$ for every $v\in E^{0}\backslash
H$. We claim that $P$ cannot be of the form $I(H,B_{H}\backslash\{u\})$.
Because, otherwise, since $E^{0}\backslash H$ is a maximal tail, there is an
edge $e$ with $s(e)=u$ and $r(e)=w\in E^{0}\backslash H$ and, since $w\geq u$,
this would then give rise to a closed path in $E^{0}\backslash H$,
contradicting the fact that $E$ contains no cycles. Thus $P=I(H,B_{H})$.
\end{proof}

We are now ready to prove the main theorem of this section.

\begin{theorem}
\label{main Th} Let $L$ be the Leavitt path algebra of an arbitrary graph $E$
over a field $K$. Then the following properties are equivalent:

(a) $L$ is a right/left $\Sigma$-$V$ ring;

(b) The graph $E$ contains no cycles, there is a positive integer $d$ such
that the number of distinct paths that end at any vertex $v$ in $E$ is less than or equal to $d$, the length of any path in $E$ is less than or equal to $d$ and every path in $E$ eventually
ends at a sink;

(c) The graph $E$ contains no cycles and every maximal tail $M$ of vertices in
$E$ is finite (and thus there is a fixed $w\in M$ such that $u\geq w$ for
every $u$ in $M$) and the number of distinct paths in $E$ that end at $w$
(including $w$) is at most a fixed positive integer $d$ which is independent
of $M$;

(d) $L$ is von Neumann regular and there is a fixed positive integer $d$ such
that, such that for any prime ideal $P$ of $L$, $\ L/P\cong M_{n}(K)$ with
$n\leq d$.
\end{theorem}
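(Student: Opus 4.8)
The plan is to prove the cycle of implications (a) $\Rightarrow$ (b) $\Rightarrow$ (c) $\Rightarrow$ (d) $\Rightarrow$ (a), drawing heavily on the preparatory results already established. For (a) $\Rightarrow$ (b): if $L$ is a right/left $\Sigma$-$V$ ring, then by Proposition \ref{vnr} the graph $E$ is acyclic and $L$ is von Neumann regular, and by Proposition \ref{bdd idx} $L$ has bounded index of nilpotence, say index at most $n$. Theorem \ref{bdd index} then immediately gives a fixed positive integer $n$ bounding the number of distinct vertices in any path (which, since $E$ is acyclic, equals the length plus one) and bounding the number of distinct paths ending at any vertex. It remains to see that every path eventually ends at a sink. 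Since every path has length at most $n-1$, any path can be extended only finitely many times, so a maximal extension exists and terminates at a vertex emitting no edges, i.e., a sink. Taking $d = n$ (or $d = n-1$ for the length bound, absorbing the off-by-one) yields (b).

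For (b) $\Rightarrow$ (c): given (b), $E$ is acyclic. Let $M$ be a maximal tail. Every vertex of $M$ lies on a path (of bounded length) that eventually ends at a sink; by property (3) of a maximal tail, the sink reached by iterating within $M$ stays in $M$, so $M$ contains a sink $w$. By downward directedness of $M$ and acyclicity, $w$ must be the unique sink in $M$ and $u \geq w$ for all $u \in M$. Since every path ending at a vertex $u \in M$ extends to a path ending at $w$, and $M$ is downward-closed under $\geq^{-1}$ (property (2)), the vertices of $M$ are precisely the sources of paths ending at $w$ together with these paths' intermediate vertices; as there are at most $d$ such paths and each has length at most $d$, $M$ is finite. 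The number of distinct paths ending at $w$ is at most $d$ by hypothesis, and this bound is independent of $M$. This gives (c).

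For (c) $\Rightarrow$ (d): since $E$ is acyclic, by \cite[Theorem 1]{AR} (as in Proposition \ref{vnr}) $L$ is von Neumann regular, and by Lemma \ref{ml} every graded prime ideal $P$ of $L$ has the form $P = I(H, B_H)$ with $E^0 \setminus H$ a maximal tail $M$. By (c), $M$ is finite with a sink $w$ such that $u \geq w$ for all $u \in M$, and $E \setminus (H, B_H)$ is a finite acyclic graph in which $w$ is the unique sink; Proposition 3.5 of \cite{AAS-1} (or Proposition 3.4 of \cite{AAS}) then gives $L/P \cong L_K(E \setminus (H, B_H)) \cong M_{n}(K)$ where $n$ is the number of distinct paths ending at $w$, so $n \leq d$. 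For a general (not necessarily graded) prime ideal $P$, $P$ contains a graded prime ideal $gr(P)$ (see \cite{R}), and $L/gr(P) \cong M_n(K)$ is simple, forcing $P = gr(P)$; hence every prime ideal is graded and yields a matrix ring $M_n(K)$ with $n \leq d$. This is (d). Finally, for (d) $\Rightarrow$ (a): since $\bigcap\{P : P \text{ prime}\} = 0$, $L$ is a subdirect product of the simple artinian rings $L/P \cong M_n(K)$ with $n \leq d$; by Theorem \ref{bdd index} (condition (c) $\Rightarrow$ (a), with only matrix rings over $K$ appearing) $L$ has bounded index of nilpotence, and in particular by Theorem \ref{bdd index} the graph is acyclic with bounded path lengths. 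Since $L$ is von Neumann regular, each $L/P$ is flat over $L$; a simple $L/P$-module is $\Sigma$-injective over the artinian ring $L/P$, and the injective-producing lemma (\cite[Chapter I, Corollary 3.6A]{Lam}, or its ungraded analogue) transfers $\Sigma$-injectivity over $L/P$ to $\Sigma$-injectivity over $L$. As every simple $L$-module arises as a simple module over some $L/P$ with $P = \operatorname{ann}_L(S)$ a (necessarily graded, hence matrix-type) prime ideal, $L$ is a right/left $\Sigma$-$V$ ring.

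The main obstacle is the direction (a) $\Rightarrow$ (b), specifically obtaining the \emph{bounded} index of nilpotence from the $\Sigma$-$V$ hypothesis — this is exactly the content of Proposition \ref{bdd idx}, whose proof passes through the maximal right quotient ring and the type theory of von Neumann regular right self-injective rings; once that boundedness is in hand, Theorem \ref{bdd index} does the remaining structural work. A secondary point requiring care is the equivalence of "right $\Sigma$-$V$" and "left $\Sigma$-$V$" asserted in (a): this is not automatic, but follows a posteriori because condition (d) is left-right symmetric, so one shows each of the one-sided hypotheses is equivalent to (d) separately (the arguments above use only the right-sided hypothesis, and the left-sided argument is identical via the involution on $L_K(E)$). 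One should also double-check the off-by-one in the length-versus-vertex-count bound when merging the two bounds in (b) into a single constant $d$, but this is routine.
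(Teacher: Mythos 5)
Your proposal is correct and follows essentially the same route as the paper: the cycle (a)$\Rightarrow$(b)$\Rightarrow$(c)$\Rightarrow$(d)$\Rightarrow$(a), powered by Propositions \ref{vnr} and \ref{bdd idx}, Theorem \ref{bdd index}, Lemma \ref{ml}, the matrix-ring descriptions from \cite{AAS}/\cite{AAS-1}, and the injective-producing lemma for the flat quotient $L/P$. Your additional touches (justifying that every path ends at a sink, deducing that every prime ideal is graded from the simplicity of $L/gr(P)$, and handling the left/right symmetry via the involution) are correct fill-ins of details the paper leaves implicit, not a different method.
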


\begin{proof}
Assume (a). Since contains no cycles. Also, by Proposition \ref{bdd idx},
$L$ has bounded index say $d$. We then appeal to Theorem \ref{bdd index} to
conclude that there is a fixed positive integer $d$ such that the number of
distinct vertices in any path in $E$ is less than or equal to $d$ and that the number of
distinct paths that end at any given vertex $v$ (including $v)$ is less than or equal to $d$.
Now, by Proposition \ref{df}, $L$ is directly-finite and so, by
Lemma \ref{df=>NE}, no closed path in the graph $E$ has an exit. On the other
hand, since $L$ is also a right $V$-ring, Lemma \ref{wr} implies that $L$ is
right weakly regular. Now, in view of the Lemma \ref{k}, we know that the
graph $E$ satisfies Condition (K) which in particular implies that every cycle
in $E$ has an exit. These contradicting statements imply that the graph $E$
contains no cycles. This proves (b).

Assume (b). Let $M\subseteq E^{0}$ be a maximal tail. Since there are no
cycles in $M$ and since every path has length at most $d$, every path
eventually ends at a sink. In addition, $M$ is downward directed and so $M$
has a unique sink, say $w$ and $u\geq w$ for every vertex $u$ in $M$. Since
the number of distinct paths ending at any vertex (and, in particular, at $w$)
is $\leq d$ and since no path in $E$ will have length greater than $d$, $M$ must be finite
and every vertex $v\in M$ emits at most finitely many edges $e$ such that
$r(e)\in M$. This proves (c).

Assume (c). Since $E$ is acyclic, $L$ is von Neumann regular by (Theorem 1,
\cite{AR}). Let $P$ be any prime ideal of $L$. Now $P$ is a graded ideal as
$E$ is acyclic. Hence $P=I(H,B_{H})$, by Lemma \ref{ml} where $H=P\cap E^{0}$
and $M=E^{0}\backslash H$ is a maximal tail. By hypothesis, $M$ is finite,
contains a vertex $w$ such that $v\geq w$ for every $v$ in $M$. Now
$E\backslash(H,B_{H})$ is acyclic and $((E\backslash H,B_{H}))^{0}%
=E^{0}\backslash H=M$ is downward directed. So $w$ is a unique sink in
$E\backslash(H,B_{H})$ and, as the number of distinct paths ending at $w$ is
$\leq d$, $E\backslash(H,B_{H})$ is finite (acyclic) graph and we appeal to
(Proposition 3.5, \cite{AAS}), to conclude that $L/P\cong$ $L_{K}%
(E\backslash(H,B_{H}))\cong M_{r}(K)$, where $r\leq d$. This proves (d).

Assume (d). Suppose $S$ be a simple right $L$-module. Now $P=ann_{L}(S)$ is a
prime ideal and so, by hypothesis, $L/P\cong M_{n}(K)$ for some integer $n$.
Now $L/P$ is semisimple artinian and so $S$ is a right $\Sigma$-injective
$L/P$-module. As $L$ is von Neumann regular, $S$ is also a right $\Sigma$-injective $L$-module (see \cite{Lam}). This proves (a).
\end{proof}

When the graph $E$ is row-finite, Theorem \ref{main Th} can be strengthened
leading to a structure theorem for Leavitt path algebras of row-finite graphs
which are $\Sigma$-$V$ rings.

As a consequence, for a Leavitt path algebra over a row-finite graph we have the following. 

\begin{theorem}
\label{row-finite} Let $E$ be a row-finite graph. Then the following
properties are equivalent for $L:=L_{K}(E)$:

(i) $L$ is a $\Sigma$-$V$ ring;

(ii) There is a fixed positive integer $d$ and an isomorphism%

\[
L\cong%
{\displaystyle\bigoplus\limits_{i\in I}}
M_{n_{i}}(K)
\]
where $I$ is an arbitrary index set and $n_{i}\leq d$ for all $i\in I$. Thus,
in particular, $L$ is semi-simple (that is, a direct sum of simple left/right ideals).
\end{theorem}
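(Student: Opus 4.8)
The plan is to derive Theorem~\ref{row-finite} as a direct specialization of Theorem~\ref{main Th} together with the row-finite structure theorem for Leavitt path algebras of acyclic graphs. First I would prove (i)$\implies$(ii). Assume $L$ is a $\Sigma$-$V$ ring. By Theorem~\ref{main Th}, condition (b) holds: the graph $E$ contains no cycles, there is a fixed positive integer $d$ bounding the length of every path and the number of distinct paths ending at any vertex, and every path in $E$ eventually ends at a sink. Since $E$ is now assumed row-finite and acyclic with every path terminating at a sink, I would invoke the structure theorem for Leavitt path algebras of acyclic row-finite graphs in which every vertex connects to a sink --- precisely Theorem 3.7 and Propositions 3.5 and 3.6 of \cite{AAS-1}, exactly as used in the proof of Theorem~\ref{bddIdx-row-finte}(a)$\implies$(b) --- to conclude $L\cong\bigoplus_{i\in I}M_{n_i}(K)$, where $I$ indexes the sinks of $E$ and $n_i$ is the number of distinct paths in $E$ ending at the $i$-th sink. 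The bound on path counts from condition (b) of Theorem~\ref{main Th} gives $n_i\le d$ for all $i$, so this is the desired decomposition; it is a direct sum of simple two-sided ideals, each of which is a direct sum of (isomorphic) simple right/left ideals, hence $L$ is semisimple.

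For the converse (ii)$\implies$(i), suppose $L\cong\bigoplus_{i\in I}M_{n_i}(K)$ with $n_i\le d$. I would show $L$ satisfies condition (d) of Theorem~\ref{main Th}. Each $M_{n_i}(K)$ is simple artinian, so it is von Neumann regular, and a direct sum of von Neumann regular rings is von Neumann regular; thus $L$ is von Neumann regular. For the prime ideals: any prime ideal $P$ of $L=\bigoplus_i M_{n_i}(K)$ must contain all but one of the summands (since the summands are two-sided ideals with pairwise products zero, and $P$ is prime), so $L/P\cong M_{n_i}(K)$ for some $i$, with $n_i\le d$. Hence condition (d) of Theorem~\ref{main Th} holds, and therefore $L$ is a $\Sigma$-$V$ ring. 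Alternatively --- and perhaps more cleanly --- one can argue directly that every simple right module over $\bigoplus_i M_{n_i}(K)$ is a simple module over a single matrix factor $M_{n_i}(K)$, hence $\Sigma$-injective over that factor (semisimple artinian rings are $\Sigma$-$V$), and since the other factors act as zero and $L$ is von Neumann regular, the injectivity lifts to $L$; I would keep whichever phrasing matches the surrounding exposition.

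I do not anticipate a genuine obstacle here: the theorem is essentially a corollary, and the only real content is recognizing that row-finiteness upgrades the subdirect product in Theorem~\ref{main Th}(d) to an honest direct sum, which is exactly what \cite{AAS-1} supplies for acyclic row-finite graphs whose vertices all connect to sinks. The one point requiring a sentence of care is the identification of the index set $I$ with the set of sinks and of $n_i$ with the number of paths ending at the $i$-th sink, so that the uniform bound $n_i\le d$ is transparently inherited from condition (b) of Theorem~\ref{main Th}; this is entirely parallel to the already-proved Theorem~\ref{bddIdx-row-finte} and needs no new idea.
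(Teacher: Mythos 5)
Your proposal is correct and follows essentially the same route as the paper: the forward direction reduces to the acyclicity and uniform bound supplied by the earlier results of the paper and then applies the structure theorem of \cite{AAS-1} for row-finite acyclic graphs (the paper packages this step as an appeal to Theorem~\ref{bddIdx-row-finte} with the $M_{n_j}(K[x,x^{-1}])$ summands vanishing, while you cite the underlying propositions directly), and the converse is the same observation that a ring direct sum of matrix rings $M_{n_i}(K)$ is a $\Sigma$-$V$ ring. The extra detail you give for (ii)$\implies$(i) via condition (d) of Theorem~\ref{main Th} is sound but not needed; the paper simply notes it is immediate.
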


\begin{proof}
Assume (i). By Proposition \ref{bdd idx}, $L$ has bounded index of nilpotence, say $d$.
Since the graph $E$ contains no cycles, it then follows from Theorem \ref{bddIdx-row-finte} that
$L\cong%
{\displaystyle\bigoplus\limits_{i\in I}}
M_{n_{i}}(K)$ where $I$ is an arbitrary index set and $n_{i}\leq d$ for all
$i\in I$. This proves (ii).

Now (ii)$\implies$(i) follows immediately, since $L$, being a ring direct sum of the (semisimple artinian) matrix rings $M_{n_{i}}(K)$, is a $\Sigma$-$V$ ring.
\end{proof}

We now construct an example illustrating the ideas of Theorem \ref{main Th}.
It also shows that Theorem \ref{row-finite} no longer holds if the graph $E$ is not row-finite.

\begin{example} \rm
\label{Ex-1}Consider the following ``infinite clock" graph $E$:%
\[%
\begin{array}
[c]{ccccc}
&  & \bullet_{w_{1}} &  & \bullet_{w_{2}}\\
& \ddots & \uparrow & \nearrow & \\
& \cdots & \bullet_{v} & \longrightarrow & \bullet_{w_{3}}\\
& \swarrow & \vdots & \ddots & \\
_{{}} &  &  &  &
\end{array}
\]
Thus $E^{0}=\{v\}\cup\{w_{1},w_{2},\cdot\cdot\cdot,w_{n},\cdot\cdot\cdot\}$
where the $w_{i}$ are all sinks. For each $n\geq1$, let $e_{n}$ denote the
single edge connecting $v$ to $w_{n}$. The graph $E$ is acyclic. The number of
distinct paths ending at any given sink \ (including the sink) is $2$ and all
the paths eventually end at a sink. Thus, by Theorem \ref{main Th}, $L_{K}(E)$
is a $\Sigma$-$V$ ring. For each $n\geq1$, $H_{n}=\{w_{i}:i\neq n\}$ is a
hereditary saturated set, $B_{H_{n}}=\{v\}$ and $E^{0}\backslash
H_{n}=\{v,w_{n}\}$ is downward directed. Hence the ideal $P_{n}$ generated by
$H_{n}\cup\{v-e_{n}e_{n}^{\ast}\}$ is a prime ideal and $L_{K}(E)/P\cong
M_{2}(K)$. Clearly $%
{\displaystyle\bigcap\limits_{n=1}^{\infty}}
P_{n}=0$. Moreover, every prime ideal $P$ of $L_{K}(E)$ is equal to $P_{n}$
for some $n$. Clearly, $L_{K}(E)$ is a subdirect product of countably-infinite
number of copies of $M_{2}(K)$. Since $E$ contains no cycles, $L_{K}(E)$ is
also von Neumann regular by \cite{AR}.

But $L_{K}(E)$ cannot decompose as a direct sum of the matrix rings $M_{2}%
(K)$. Because, otherwise, $v$ would lie in a direct sum of finitely many
copies of $M_{2}(K)$. Since the ideal generated by $v$ is $L_{K}(E)$,
$L_{K}(E)$ will then be a direct sum of finitely many copies of $M_{2}(K)$.
This is impossible since $L_{K}(E)$ contains an infinite set of orthogonal
idempotents $\{e_{n}e_{n}^{\ast}:n\geq1\}$.

We can also describe the internal structure of this ring $L_{K}(E)$. The socle
$S$ of $L_{K}(E)$ is the ideal generated by the sinks $\{w_{i}:i\geq1\}$,
$S\cong%
{\displaystyle\bigoplus\limits_{\aleph_{0}}}
M_{2}(K)$ and $L_{K}(E)/S\cong K$.
\end{example}

\begin{remark} \rm
In describing when a Leavitt path algebra $L_{K}(E)$ is a $\Sigma$-$V$-ring, the
proof of Theorem \ref{main Th} uses the fact that if $L_{K}(E)$ is directly-finite,
then no cycle in the graph $E$ has an exit. Conversely, if no cycle in a
graph $E$ has an exit, then $L_{K}(E)$ indeed becomes directly-finite. This
fact was proved in \cite{V} with a lengthy proof using interesting concepts
of traces and the Cohen-Leavitt path algebras. Because of the relevance of
this result to our investigation, we wish to provide an alternative shorter
proof of this result below.
\end{remark}

 We provide the altrnative proof by using the ideas developed in this section and also
using the subalgebra construction given in \cite{AR}. This subalgebra
construction has turned out to be a very useful tool in making the
``local-to-global jump" while proving a ring theoretic property of finite
character for a Leavitt path algebra $L$ of an arbitrary graph $E$, as
exemplified in proving the following theorems in a series of papers:

1)  $L$ von Neumann regular $\Leftrightarrow$ $E$ is acyclic(\cite{AR}), (2) Every simple left/right $L$-module is graded $\Leftrightarrow$ 
$L$ is von Neumann regular (\cite{HR}), (3) Every Leavitt path algebra $L$ is
a graded von Neumann regular ring (\cite{H}), (4) Every Leavitt path algebra
$L$ a right/left Bezout ring \cite{AMT}. 

Proposition 2 in \cite{AR}, as stated, does not include the additional
properties implied by the subalgebra construction. Indeed, a careful
inspection of the construction in \cite{AR} shows that the morphism $\theta$
in the construction is actually a graded morphism whose image is a graded
submodule of $L$ and it also reveals some properties of cycles. We include
these facts in the following stronger formulation of Proposition 2  of
\cite{AR} which we shall be using.

\begin{theorem}
\label{GeneRanga} (\cite{AR}) Let $E$ be an arbitrary graph. Then the Leavitt
path algebra $L:=L_{K}(E)$ is a directed union of graded subalgebras
$B=A\oplus K\varepsilon_{1}\oplus\cdot\cdot\cdot\oplus K\varepsilon_{n}$ where
$A$ is the image of a graded homomorphism $\theta$ from a Leavitt path algebra
$L_{K}(F_{B})$ to $L$ with $F_{B}$ a finite graph (depending on $B$), the
elements $\varepsilon_{i}$ are homogeneous mutually orthogonal idempotents and
$\oplus$ denotes a ring direct sum. Moreover, any cycle $c$ in the graph
$F_{B}$ gives rise to a cycle $c^{\prime}$ in $E$ such that if $c$ has an exit
in $F_{B}$ then $c^{\prime}$ has an exit in $E$.
\end{theorem}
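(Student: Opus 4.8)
The plan is to obtain Theorem~\ref{GeneRanga} by revisiting, rather than reproving, the subalgebra construction of Proposition~2 in \cite{AR}, and by recording two features of that construction that are visible on inspection but are not stated there: that the connecting homomorphism $\theta$ is graded, and that cycles (together with their exits) in the finite graph $F_B$ descend to cycles (together with their exits) in $E$.

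First I would set up the reduction. Since $L=L_K(E)$ is the union of its finitely generated $K$-subalgebras, and any two of these lie in a third, it suffices to show that every finitely generated subalgebra of $L$ is contained in a subalgebra $B=A\oplus K\varepsilon_1\oplus\cdots\oplus K\varepsilon_n$ of the stated form; the family of all such $B$ is then automatically upward directed, with union $L$. Given a finite generating set, write its members as $K$-linear combinations of monomials $\alpha\beta^{\ast}$ (with $\alpha,\beta$ paths, $r(\alpha)=r(\beta)$), and let $V$ and $W$ be the finite sets of vertices and of edges occurring in these monomials. This data feeds the construction of \cite{AR}, which yields: a finite graph $F_B$, whose vertices and edges are copies of those in $V$ and $W$ with certain vertices ``split'' and certain incidences reassigned so as to accommodate the CK-2 relations at regular vertices of $E$ that emit edges outside $W$; a $K$-algebra homomorphism $\theta\colon L_K(F_B)\to L$; and finitely many pairwise orthogonal idempotents $\varepsilon_i$ of $L$, each of the form $v-\sum_{e\in W,\, s(e)=v}ee^{\ast}$ for a suitable $v\in V$, which satisfy $A\varepsilon_i=\varepsilon_i A=0$ where $A:=\theta(L_K(F_B))$. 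Then $B:=A\oplus K\varepsilon_1\oplus\cdots\oplus K\varepsilon_n$ is a ring direct sum, and $B$ contains every vertex of $V$, every edge and ghost edge of $W$, and hence the original finite generating set. I would quote all of this from \cite{AR}; the verification that $\theta$ respects the CK relations --- which is what forces the splitting of vertices and the use of the partial idempotents $\sum ee^{\ast}$ in place of full vertices --- is the technical core there.

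Then come the two additions. For the grading: in $L_K(F_B)$ the generators carry degrees $0$ (vertices), $1$ (edges) and $-1$ (ghost edges), and inspection of the construction shows that $\theta$ sends each generator to a homogeneous element of $L$ of the matching degree --- vertices and the partial idempotents $\sum ee^{\ast}$ to degree $0$, edges to degree $1$, ghost edges to degree $-1$. Hence $\theta$ is a graded homomorphism, $A$ is a graded subalgebra of $L$, and, since each $\varepsilon_i\in L_0$, the same holds for $B$ and the decomposition $B=A\oplus K\varepsilon_1\oplus\cdots\oplus K\varepsilon_n$ is one of graded rings. For the cycles: the construction carries a map on edges, sending each $\bar e\in F_B^{1}$ to the edge $e\in W$ of which it is a copy. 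I would verify, directly from the way the incidences of $F_B$ are defined, that a closed path $\bar e_1\cdots\bar e_m$ of $F_B$ passes through none of the split vertices, so that $r_E(e_j)=s_E(e_{j+1})$ for all $j$ (indices mod $m$) and $c':=e_1\cdots e_m$ is a closed path of $E$; that distinctness of the vertices traversed by the cycle in $F_B$ forces distinctness of the vertices of $c'$, so $c'$ is a cycle; and that an exit $\bar g\neq\bar e_j$ for the cycle in $F_B$, based at $s_{F_B}(\bar e_j)$, has underlying edge $g\in W$ with $s_E(g)=s_E(e_j)$ and $g\neq e_j$, hence an exit for $c'$ in $E$. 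This gives the last clause of the theorem.

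The main obstacle is really internal to \cite{AR}: showing that $\theta$ is well defined, i.e.\ that it preserves the CK-2 relations of $L_K(F_B)$, which is precisely the reason the construction must split certain vertices and replace them by partial idempotents instead of working inside $E$ verbatim. Since I only quote that result, the one point still requiring genuine care on my side is the cycle bookkeeping above --- in particular the claim that a closed path of $F_B$ avoids every split vertex (the split vertices are ``pure sources'' of $F_B$, receiving no edges), which is exactly the structural fact encoded in the last clause of Theorem~\ref{GeneRanga} and which guarantees that the splitting operation creates no new cycles. Granted that, the descent of cycles and exits from $F_B$ to $E$ is immediate.
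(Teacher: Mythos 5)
Your proposal takes the same route as the paper, which gives no independent proof of Theorem~\ref{GeneRanga} but simply defers to the subalgebra construction of Proposition~2 in \cite{AR}, remarking that a careful inspection of that construction shows the morphism $\theta$ to be graded and also yields the cycle/exit correspondence. Your write-up merely makes explicit the two inspections the paper leaves implicit --- that $\theta$ sends generators to homogeneous elements of matching degree, and that closed paths in $F_{B}$ avoid the split vertices so that cycles and their exits descend to $E$.
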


We now provide one more instance of using Theorem \ref{GeneRanga}  to provide
an alternative shorter proof of the following theorem by V\'{a}s \cite{V}.

\begin{theorem}
\label{ne = > df}(\cite{V}) Let $E$ be an arbitrary graph in which no cycle
has an exit. Then $L:=L_{K}(E)$ is directly finite.
\end{theorem}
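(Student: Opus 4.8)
The plan is to reduce the directly-finite property of $L_K(E)$ for an arbitrary graph $E$ with no cycle having an exit to the case of a \emph{finite} graph via the subalgebra construction in Theorem \ref{GeneRanga}. First I would recall that direct-finiteness for rings with local units is a property of finite character: by the discussion preceding Proposition \ref{df}, $L=L_K(E)$ is directly-finite if and only if for every local unit $u$ the corner $uLu$ is a directly-finite ring with identity. Since $L$ is the directed union of the graded subalgebras $B=A\oplus K\varepsilon_1\oplus\cdots\oplus K\varepsilon_n$ and any local unit $u$ lies in some such $B$, and since $uLu=uBu$ because $B$ is a subalgebra containing $u$ with $uL\subseteq B$ (here one uses that $B$ is a union-member large enough to contain $u$ and that $u$ is a local unit for the relevant finite piece), it suffices to show each $B$ is directly-finite.

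Next I would handle $B=A\oplus K\varepsilon_1\oplus\cdots\oplus K\varepsilon_n$. The summands $K\varepsilon_i$ are fields and hence directly-finite, so by the ring direct sum decomposition it is enough to show that $A$, the image of the graded homomorphism $\theta\colon L_K(F_B)\to L$, is directly-finite. Since $A$ is a homomorphic image of the Leavitt path algebra $L_K(F_B)$ of the \emph{finite} graph $F_B$, and since homomorphic images of directly-finite (von Neumann) objects need care, I would instead argue directly on $F_B$: by the last sentence of Theorem \ref{GeneRanga}, any cycle $c$ in $F_B$ with an exit in $F_B$ produces a cycle $c'$ in $E$ with an exit in $E$, contradicting the hypothesis on $E$. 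Hence no cycle in the finite graph $F_B$ has an exit. For a finite graph $F_B$ in which no cycle has an exit, $L_K(F_B)$ is a finite direct sum of matrix rings $M_{t}(K)$ and $M_{t}(K[x,x^{-1}])$ (this is the finite-graph structure theorem, e.g.\ via Theorem \ref{bddIdx-row-finte} applied to the row-finite — indeed finite — graph $F_B$, or the classical structure result cited as Propositions 3.4–3.6 of \cite{AAS}, \cite{AAS-1}), and each such matrix ring is directly-finite since $K$ and $K[x,x^{-1}]$ are domains. Therefore $L_K(F_B)$ is directly-finite, and consequently so is its homomorphic image $A$, because a homomorphic image of a finite direct product of directly-finite rings — in fact a finite direct sum of matrix rings over domains — is again such a direct sum (quotients of $M_t(K)$ and $M_t(K[x,x^{-1}])$ by ideals are again of this form, all of bounded index of nilpotence).

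Finally I would assemble the pieces: each $B$ in the directed union is a ring direct sum of the field pieces $K\varepsilon_i$ and the piece $A$, all directly-finite, so $B$ is directly-finite; since every local unit of $L$ lies in some $B$ and the corner ring it determines is computed inside that $B$, every corner $uLu$ is directly-finite, whence $L$ is directly-finite. The main obstacle I anticipate is the bookkeeping in the reduction $uLu = uBu$: one must verify carefully that the directed union structure of Theorem \ref{GeneRanga} is compatible with localization at an idempotent, i.e.\ that for a local unit $u\in B$ we genuinely have $uLu\subseteq B$ so that the corner ring is unchanged upon passing from $L$ to $B$. This uses that the $B$'s form a \emph{directed} system of subalgebras exhausting $L$ together with the fact that $u$, being a local unit, already acts as an identity on a set of generators whose enveloping finite subalgebra can be taken to be (contained in) $B$; the homogeneity of the $\varepsilon_i$ and the gradedness of $\theta$ keep everything inside the graded piece $B$. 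Once that localization compatibility is pinned down, the rest is the structure theory of Leavitt path algebras of finite graphs with no cycle having an exit, which is already available.
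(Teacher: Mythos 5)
Your overall strategy---reduce to the finite-graph case via the directed-union decomposition of Theorem \ref{GeneRanga}, using its last assertion to transfer the no-exit hypothesis from $E$ to $F_B$---is exactly the paper's. However, the reduction step as you phrase it contains a genuine error: the claim that $uLu=uBu$, i.e.\ that $uLu\subseteq B$ for a local unit $u\in B$, is false in general. Already for $u=v$ a single vertex of an infinite graph, the corner $vLv$ contains elements $pq^{\ast}$ with $s(p)=s(q)=v$ involving arbitrarily many edges of $E$, whereas each $B$ in the directed union is built from only finitely many generators; so no single member of the union contains $vLv$, and the homogeneity of the $\varepsilon_i$ or the gradedness of $\theta$ does not repair this. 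Fortunately no control of corners is needed: direct finiteness is an \emph{elementwise} condition, so one fixes $a,b$ and a local unit $u$ with $ua=au=a$, $ub=bu=b$ and $ab=u$, chooses a single $B$ from the directed system containing $a$, $b$ and $u$, and, once $B$ is known to be directly finite, concludes $ba=u$ inside $B$ and hence in $L$. This is precisely the paper's argument and is strictly easier than the corner-ring reduction you set up as your ``main obstacle.''

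For the finite case your route also differs slightly from the paper's and needs two small repairs. You invoke the decomposition of $L_K(F_B)$ into a finite direct sum of $M_t(K)$'s and $M_t(K[x,x^{-1}])$'s via Theorem \ref{bddIdx-row-finte}; to apply it you must first check that a finite graph with no cycle having an exit has bounded index of nilpotence (every path meets at most $|F_B^0|$ vertices, and since no path can leave a cycle, only finitely many paths end at any vertex), which is true but should be said. The paper instead embeds the finite-graph algebra as a subdirect product of the quotients $M_t(K)$ and $M_t(K[x,x^{-1}])$ by its graded prime ideals, which avoids the full structure theorem. Finally, your passage from $L_K(F_B)$ to its homomorphic image $A$ is stated too loosely: a quotient of $M_t(K[x,x^{-1}])$ is $M_t(K[x,x^{-1}]/I)$, which need not again be a matrix ring over $K$ or $K[x,x^{-1}]$; the correct justification is that matrix rings over commutative rings are directly finite, so every such quotient is directly finite. (The paper sidesteps this by identifying $B$ itself with $L_K(G)$ for a finite graph $G$ in which no cycle has an exit and applying the finite case to $B$ directly.) With these repairs your proof is correct and coincides in substance with the paper's.
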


\begin{proof}
Case 1: Suppose $E$ is a finite graph. Let $P$ be a graded prime ideal of $L$
with $P\cap E^{0}=H$. By \cite{R}, $E^{0}\backslash H$ is downward directed.
If $E^{0}\backslash H$ contains a cycle, then our hypothesis implies it is the
only cycle (without exits) based at a vertex $v$ and moreover, every path in
$E\backslash H$ eventually ends at \ $v$. Then, by Proposition 3.4 of
\cite{AAS}, $L/P\cong L_{K}(E\backslash H)\cong M_{t}(K[x,x^{-1}])$ for some
positive integer $t$. On the other hand, if $E\backslash H$ contains no
cycles, it will have a unique sink $w$ such that $u\geq w$ for all $u\in
E^{0}\backslash H$. Then by Proposition 3.5 of \cite{AAS-1}, $L/P\cong
L_{K}(E\backslash H)\cong M_{t}(K)$ for some integer $t$. Since the
intersection of all graded prime ideals is zero, we have an embedding of $L$
into a direct product\ $D$ of finitely many matrix rings \ $A_{i}$,
$i=1,\cdot\cdot\cdot,n$, where each $A_{i}\cong$ $M_{t}(K)$ or $M_{t}%
(K[x,x^{-1}]$. Since the matrix rings $M_{t}(K)$ and $M_{t}(K[x,x^{-1}]$ are
directly-finite, the ring $D$ and hence $L$ is directly-finite.

Suppose $E$ is an arbitrary graph. Let $a,b\in L$ with a local unit $u$ so
that $ua=au=a$, $ub=bu=b$ and $ab=u$. We wish to show that $ba=u$. By Theorem
\ref{GeneRanga}, there is a graded subalgebra $B$ such that $a,b,u\in B$ and
$B=A\oplus K\varepsilon_{1}\oplus\cdot\cdot\cdot\oplus K\varepsilon_{n}$ where
$A$ is the image of a graded subalgebra $B$ such that $a,b,u\in B$ and
$B=A\oplus K\varepsilon_{1}\oplus\cdot\cdot\cdot\oplus K\varepsilon_{n}$ where
$A$ is the image of a graded homomorphism $\theta$ from a Leavitt path algebra
$L_{K}(F_{B})$ to $L$ with $F_{B}$ a finite graph (depending on $B$), the
elements $\varepsilon_{i}$ are homogeneous mutually orthogonal idempotents and
$\oplus$ denotes a ring direct sum. Clearly, $B\cong L_{K}(G)$ where
$G=F_{B}\cup\{$isolated vertices $v_{1},\cdot\cdot\cdot,v_{n}$ corresponding
to $\varepsilon_{1},\cdot\cdot\cdot,\varepsilon_{n}\}$. Since no cycle in $E$
has an exit, no cycle in $F_{B}$ has an exit by Theorem \ref{GeneRanga} and
so no cycle in the finite graph $G$ has an exit. So we appeal to Case 1 to
conclude that $B$ is directly-finite. As noted in the Preliminaries section,
we then have $ba=u$.
\end{proof}

\begin{remark} \rm
We wish to point out an interesting consequence of our proof: A
directly finite Leavitt path algebra $L$ of an arbitrary graph $E$ is a
directed union of graded subalgebras each of which is a directly-finite
Leavitt path algebra of a finite graph.
\end{remark}

\bigskip

\bigskip

\bigskip


\begin{thebibliography}{9}                                                                                                

\bibitem {AArS}G. Abrams, P. Ara and M. Siles Molina, Leavitt path algebras -
a primer, Springer (To appear).

\bibitem {AAS}G. Abrams, G. Aranda Pino and M. Siles Molina, Chain conditions
for Leavitt path algebras, Forum Math. 22 (1), (2010), 95-114.

\bibitem {AAS-1}G. Abrams, G. Aranda Pino and M. Siles Molina, Finite dimensional Leavitt path algebras, J. Pure Appl.
Algebra, 209 (2007), 753-762.

\bibitem{ABR} G. Abrams, J. P. Bell, K. M. Rangaswamy, On prime non-primitive von Neumann regular algebras, Trans. Amer. Math. Soc., to appear.

\bibitem {AR}G. Abrams and K. M. Rangaswamy, Regularity conditions for
arbitrary Leavitt path algebras, Algebras Represent. Theory, 13 (2010), 319-334.

\bibitem {ARS1}G. Abrams, K. M. Rangaswamy and M. Siles Molina, The socle
series of a Leavitt path algebra, Israel J. Math. 184 (2011), 413-436.

\bibitem {AB}P. Ara, M. Brustenga, Module theory over Leavitt path algebras
and K-theory, J. Pure Appl. Algebra, 214 (2010), 1131-1151.

\bibitem {AMT}G. Abrams, F. Matinese and A. Tonolo, Every Leavitt path algebra
is a Bezout ring (preprint).

\bibitem {ARS}G. Aranda Pino, K. M. Rangaswamy and M. Siles Molina, Weakly regular
and self-injective Leavitt path algebras over arbitrary graphs, Algebras
Represent. Theory 14 (2011), 751-777.

\bibitem {AR1}P. Ara and K. M. Rangaswamy, Finitely presented simple modules
over Leavitt path algebras, J. Algebra 417 (2014), 333-352.

\bibitem {Baccella}G. Baccella, von Neumann regularity of V-rings with
artinian primitive factor rings, Proc. Amer. Math. Soc.103, 3 (1988), 747-749.

\bibitem {C}X.W. Chen, Irreducible representations of Leavitt path algebras,
Forum Math. 27, 1 (2015), 549-574.

\bibitem{Cozzens} J. H. Cozzens, Homological properties of the ring of differential polynomials, Bull. Amer. Math. Soc. 76 (1970), 75-79.  

\bibitem{Domanov} O. I. Domanov, A prime but not primitive regular ring, Uspehi Mat. Nauk. 32, 6 (1977), 219-220.

\bibitem {F}C. Faith, Algebra: Rings, Modules and Categories I, Springer -
Verlag, Grundlehren math. Wissenschaften Einzeldarstellungen, vol. 190 (1973).

\bibitem {FU}C. Faith and Y. Utumi, Maximal quotients rings, Proc. Amer.
Math. Soc. 16, 5 (1965), 1084-1089.

\bibitem{Fisher} J. W. Fisher, von Neumann regular rings versus $V$-rings, Lecture Notes in Pure and Appl. Math., Vol. 7, Dekker, New York (1974), 101-119.

\bibitem {Goodearl}K. R. Goodearl, von Neumann regular rings, Krieger Publishing
Co., Malabar, 1991.

\bibitem {GV}J. M. Goursaud and J. Valette, Sur \'{l} enveloppe injective des
anneaux de groupes r\'{e}guliers, Bulletin of Math. Soc. France 103 (1975), 91-102.

\bibitem {H-1}R. Hazrat, The graded structure of Leavitt path algebras,
Israel J. Math. 195 (2013), 833-895.

\bibitem {H}R. Hazrat, Leavitt path algebras are graded von Neumann regular
rings, J. Algebra 401 (2014), 220-233.

\bibitem {HR}R. Hazrat and K. M. Rangaswamy, On graded irreducible
representations of Leavitt path algebras, J. Algebra 450 (2016), 458-486.

\bibitem{Jacobson} N. Jacobson, Some remarks on one-sided inverses, Proc. Amer. Math. Soc. 1, 3 (1950), 352-355.

\bibitem{Kap1} I. Kaplansky, Rings of operators, New York, Benjamin, 1968.

\bibitem{Kap} I. Kaplansky, Algebraic and analytic aspects of operator algebras, CBMS Regional Conf. Series in Mathematics, Number 1, Amer. Math. Soc., Providence, RI, 1970.

\bibitem {L}W. G. Leavitt, The module type of a ring, Trans. Amer. Math. Soc.
103 (1962), 113-130.

\bibitem {Lam} T. Y. Lam, Lectures on Modules and Rings, Grad. Texts in Math., vol. 189, Springer,  New York, 1999.

\bibitem {Vil}G. O. Michler and O. E. Villamayor, On rings whose simple
modules are injective, J. Algebra, 25 (1973), 185-201.

\bibitem{MV} F. J. Murray and J. von Neumann, On rings of operators, Ann. of Math. 36 (1937), 116-229.

\bibitem {NV}C. Nastasescu and F. Van Oystaeyen, Graded Rings Theory, North
Holland Publ. Co., 1987.

\bibitem {R}K. M. Rangaswamy, The theory of prime ideals of Leavitt path
algebras over arbitrary graphs, J. Algebra, 375 (2013), 73-96.

\bibitem {R1}K. M. Rangaswamy, On simple modules over Leavitt path algebras, J.
Algebra 423 (2015), 239-258.

\bibitem {AS}A. K. Srivastava, On $\Sigma$-$V$ rings, Comm. Alg., 39, 7 (2011), 2430-2436.

\bibitem {T}M. Tomforde, Uniqueness theorems and ideal structure of Leavitt
path algebras, J. Algebra 318 (2007), 270-299.

\bibitem {V}Lia V\'{a}s, Canonical trace and directly finite Leavitt path
algebras, Algebras Represent. Theory 18, 3 (2015), 711-738.

\end{thebibliography}
\end{document}